 \theoremstyle{plain}
\newtheorem{thm}{Theorem}[section]
\newtheorem{lemma}[thm]{Lemma}
\newtheorem{prop}[thm]{Proposition}
\newtheorem{cor}[thm]{Corollary}
\newtheorem*{thmA}{Theorem A}
\newtheorem*{thmB}{Theorem B}
\newtheorem*{thmC}{Theorem C}
\newtheorem*{thmD}{Theorem D}
\newtheorem*{thmE}{Theorem E}
\theoremstyle{definition}
\newtheorem{rem}[thm]{Remark}
\newtheorem*{quesF}{Question F}
\newtheorem{example}{Example}[section]
\numberwithin{equation}{section}
\newcommand{\F}{\mathbb{F}}
\newcommand{\N}{\mathbb{N}}
\newcommand{\Z}{\mathbb{Z}}
\newcommand{\Q}{\mathbb{Q}}
\newcommand{\bE}{\mathbb{E}}
\newcommand{\bbE}{\bar{\bE}}
\newcommand{\bQ}{\bar{\Q}}
\newcommand{\boC}{\mathbf{C}}
\newcommand{\tphi}{\tilde{\phi}}
\newcommand{\bchi}{\bar{\chi}}
\DeclareMathOperator{\Zen}{Z}
\DeclareMathOperator{\SL}{SL}
\DeclareMathOperator{\GL}{GL}
\DeclareMathOperator{\Syl}{Syl}
\DeclareMathOperator{\Cent}{Cent}
\DeclareMathOperator{\kernel}{ker}
\DeclareMathOperator{\image}{im}
\DeclareMathOperator{\rst}{res}
\DeclareMathOperator{\mor}{mor}
\DeclareMathOperator{\gr}{gr}
\DeclareMathOperator{\Ann}{Ann}
\DeclareMathOperator{\End}{End}
\DeclareMathOperator{\PL}{PL}
\DeclareMathOperator{\iid}{id}
\DeclareMathOperator{\incl}{int}
\DeclareMathOperator{\Jac}{Jac}
\DeclareMathOperator{\ssl}{ss}
\DeclareMathOperator{\un}{un}
\DeclareMathOperator{\cl}{cl}
\newcommand{\ca}[1]{\mathcal{#1}}
\newcommand{\argu}{\hbox to 7truept{\hrulefill}}
\title{Finite p-central groups of height k}
\author{J. Gonz\'alez-S\'anchez 
and T. S. Weigel}
\address{T.~S.~Weigel\\
Universit\`a di Milano-Bicocca\\
U5-3067, Via R.Cozzi, 53\\
20125 Milano, Italy}
\email{thomas.weigel@unimib.it}
\address{Jon Gonz\'alez-S\'anchez\\
Universidad de Cantabria\\
Departamento de Matem\'aticas, Estad\'{\i}stica y Computaci\'on \\
Facultad de Ciencias, Avda. de los Castros, s/n \\
E-39071 Santander, Spain}
\email{jon.gonzalez@unican.es}
\thanks{The first author acknowledges support by the 
Spanish Ministerio de Ciencia e Innovaci\'on, grant MTM2008-06680-C02-01.}
\subjclass[2000]{Primary 20D15, secondary 20D20, 20F14}
\date{\today}
\begin{document}

\begin{abstract}
A finite group $G$ is called {\it $p^i$-central of height $k$} if every element of order
$p^i$ of $G$ is contained in the $k^{th}$-term $\zeta_k(G)$ of the ascending central series of $G$. If $p$ is odd such a group has to be $p$-nilpotent (Thm.~A).
Finite $p$-central $p$-groups of height $p-2$ can be seen as the dual analogue
of finite potent $p$-groups, i.e., for such a finite $p$-group $P$ the group
$P/\Omega_1(P)$ is also $p$-central of height $p-2$ (Thm.~B).
In such a group $P$ the index of $P^p$  is less or equal than
the order of the subgroup $\Omega_1(P)$ (Thm.~C).
If the Sylow $p$-subgroup $P$ of a finite group $G$
is $p$-central of height $p-1$, $p$ odd, and $N_G(P)$ 
is $p$-nilpotent, then $G$ is also $p$-nilpotent (Thm.~D).
Moreover, if $G$ is a $p$-soluble finite group, $p$ odd, and
$P\in \Syl_p(G)$ is $p$-central of height $p-2$, then $N_G(P)$ controls $p$-fusion
in $G$ (Thm.~E).
It is well-known that the last two properties hold for Swan groups (see \cite{hp:pcen}).
\end{abstract}

\maketitle

\section{Introduction}
\label{s:intro}
Let $p$ be a prime number. 
Following a suggestion of A.~Mann\footnote{A.~Mann suggested this terminology to the second author during a train journey from Dublin to Galway in 1994.} we call a finite group $G$ {\it $p$-central} if every element of order $p$ is central
in $G$, i.e., let
\begin{equation}
\label{eq:defOm}
\Omega_i(G,p)=\langle\,g\in G\mid g^{p^i}=1\,\rangle,\qquad i\geq 0;
\end{equation}
then $G$ is $p$-central if, and only if, $\Omega_1(G,p)\leq\Zen(G)$. 
For a finite $p$-group $P$
we put $\Omega_i(P)=\Omega_i(P,p)$.
Obviously, subgroups of $p$-central groups are $p$-central.

Powerful finite $p$-groups play an important role in the study of compact $p$-adic analytic groups (see \cite[\S 3]{ddms:padic}). 
For finite $p$-central $p$-groups the notion of $p$-centrality is
the dual analogue of being powerful provided $p$ is odd.
Although one must say that $p$-centrality plays a less important role
in the study of pro-$p$ groups,
finite $p$-central groups are quite often the most important examples
when it comes to explicit computations in cohomology or homotopy of classifying spaces of finite groups (see for example  \cite{brothenn:cm}, 
\cite[Thm.~1.7]{hp:pcen}, \cite{th:pcenp}). 

In this note we study a more general class of finite
groups. For $i,k\geq 1$ we say that a finite group $G$ is {\it $p^i$-central
of height $k$} if $\Omega_i(G,p)\leq \zeta_k(G)$, where $\zeta_k(G)$ denotes the
$k^{th}$-term of the ascending central series of $G$. In particular, if $G$ is a 
finite $p^i$-central group of height $k$, then $\Omega_i(G,p)$ is a $p$-group.
If $G$ is a $p^i$-central group of height 1 we simply call $G$ {\it $p^i$-central}.
For odd primes $p$ one can restrict the analysis of finite $p$-central 
groups to the study of $p$-central $p$-groups (see \S\ref{ss:finpcen}).

\begin{thmA}
Let $p$ be odd, and let $G$ be a finite $p$-central group of height $k\geq 1$.
Then $G$ is $p$-nilpotent, i.e., if $P\in\Syl_p(G)$ is a Sylow $p$-subgroup of $G$,
then $P$ has a normal complement in $G$.
\end{thmA}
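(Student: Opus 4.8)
The plan is to reduce to the case where $G$ is $p$-soluble, and then use Frobenius's normal $p$-complement theorem together with a minimal-counterexample argument. First I would observe that a finite $p$-central group of height $k$ contains, by definition, all its elements of order $p$ inside $\zeta_k(G)$, hence inside the Fitting-type layer where nilpotency is visible; in particular $\Omega_1(G,p)$ is a $p$-group lying in the hypercentre, so every Sylow $p$-subgroup $P$ of $G$ is itself $p$-central of height $k$ (as a subgroup) and, more importantly, $\Omega_1(P)\leq\zeta_k(G)\cap P\leq\zeta_k(P)$. I would then take $G$ to be a counterexample of minimal order. Minimality forces $G$ to have no nontrivial normal $p'$-subgroup (pass to $G/O_{p'}(G)$, whose Sylow $p$-subgroups are isomorphic to those of $G$, so it inherits $p$-centrality of height $k$), and likewise $G = O^{p'}(G)$; combined these say $F^*(G)$ is essentially a $p$-group situation to exploit.

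The core of the argument I would run through Frobenius: $G$ is $p$-nilpotent if and only if $N_G(Q)/C_G(Q)$ is a $p$-group for every $p$-subgroup $Q\leq P$, equivalently $\Aut_G(Q)$ is a $p$-group. So suppose some $p$-subgroup $Q$ has an automorphism of order prime to $p$ induced by $N_G(Q)$. Here is where the height hypothesis does its work: I want to show that a $p'$-element $x$ acting on $Q$ centralises $\Omega_1(Q)$ and then bootstrap up. Since $p$ is odd, for a $p$-group $Q$ one has the classical fact that a $p'$-automorphism acting trivially on $\Omega_1(Q)$ acts trivially on all of $Q$ (this is the standard "no nontrivial coprime action if trivial on $\Omega_1$" statement for odd $p$ — it is exactly the regular/$p$-central flavour of result). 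But $\Omega_1(Q)\leq\Omega_1(P)\leq\zeta_k(P)$, and I would like to leverage that $\Omega_1(P)$ being central-by-bounded in $P$ constrains how $x$ can act: in fact I'd argue inductively up the lower exponent-$p$ series, using that each quotient $\Omega_{i}(P)/\Omega_{i-1}(P)$ is acted on by $x$, and the height-$k$ condition forces these actions to be trivial after finitely many steps. The cleanest route is probably: reduce to $G = QR$ with $R = \langle x\rangle$ of prime order $q\neq p$, apply Thompson-style $A\times B$ or the Hall–Higman $p$-length-one machinery to $R$ acting on $Q$, and derive a contradiction with $\Omega_1(Q)\leq\zeta_k(Q)$ forcing $[\Omega_1(Q),R]=1$, hence $[Q,R]=1$, hence $R\leq C_G(Q)$, contradicting $\Aut_G(Q)$ not being a $p$-group.

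The step I expect to be the main obstacle is precisely this last implication — showing that $[\Omega_1(Q),R]=1$ actually propagates to $[Q,R]=1$ under only the height-$k$ hypothesis rather than genuine $p$-centrality ($k=1$). For $k=1$ one has $\Omega_1(Q)$ central in $Q$ and the odd-$p$ theory of powerful/$p$-central groups gives $\Omega_1(Q) = \Omega_1(Z(Q))$ controlling the whole coprime action immediately. For general $k$ I would need a filtration argument: set $Q_0 = 1$ and $Q_{i+1}/Q_i = \Omega_1((Q/Q_i)\cap\zeta(Q/Q_i))$-type layers, and check that $R$ acting trivially on $\Omega_1$ forces trivial action on each successive layer because the extension data is controlled by commutators landing in $\Omega_1(Q)$; since $R$ is coprime and each layer is elementary abelian, Maschke/coprime-cohomology vanishing ($H^1(R, -) = 0$) lets the triviality lift. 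Assembling this carefully — making sure the height bound $k$ actually bounds the number of layers and that no characteristic-$p$ cohomological obstruction survives — is the delicate part; everything else is bookkeeping with Frobenius's criterion and minimal counterexamples. I would also need $p$ odd crucially, since for $p=2$ the statement fails (e.g. $S_3$ has $2$-central Sylow $2$-subgroups but is not $2$-nilpotent), so somewhere the argument must use that $p'$-automorphisms trivial on $\Omega_1$ of an odd $p$-group are trivial, which is false at $p=2$.
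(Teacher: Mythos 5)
Your route --- Frobenius's normal $p$-complement criterion combined with the classical odd-$p$ fact that a $p'$-automorphism of a finite $p$-group acting trivially on $\Omega_1$ is trivial --- is genuinely different from the paper's, which proves $G=P.\Cent_G(E)$ for every elementary abelian subgroup $E$ (Proposition~\ref{prop:fungr}, via the Hall--Petrescu formula and coprimality) and then concludes through Quillen stratification and Quillen's cohomological $p$-nilpotency criterion. Your approach can be made to work, and is if anything more elementary; but as written it has a genuine gap exactly at the step where the theorem's hypothesis must enter.

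The gap is in the centralization step. To show that a $p'$-element $x\in N_G(Q)$ centralizes $\Omega_1(Q)$ you argue from $\Omega_1(Q)\leq\zeta_k(Q)$ and $\Omega_1(P)\leq\zeta_k(P)$. Hypercentrality of $\Omega_1(Q)$ \emph{inside} $Q$ (or $P$) only constrains commutation with elements of $Q$ (or $P$) and says nothing about how $x$ acts; no argument resting solely on these containments can succeed, because the statement ``if $P\in\Syl_p(G)$ is $p$-central of height $k$ then $G$ is $p$-nilpotent'' is false: for $G=A_5$, $p=5$, the Sylow $5$-subgroup is cyclic, hence $p$-central, and every $5$-subgroup $Q$ satisfies $\Omega_1(Q)\leq\zeta_1(Q)$, yet $G$ is not $5$-nilpotent. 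What you must use is the hypothesis on $G$ itself: $\Omega_1(Q)\leq\Omega_1(G,p)\leq\zeta_k(G)$, hence $[\Omega_1(Q),_{(k)}\langle x\rangle]\leq[\Omega_1(G,p),_{(k)}G]=1$, and since $\langle x\rangle$ acts coprimely on the $p$-group $\Omega_1(Q)$, the identity $[N,A,A]=[N,A]$ collapses this to $[\Omega_1(Q),x]=1$. (This is in substance the computation the paper performs with the Hall--Petrescu formula in the proof of Proposition~\ref{prop:fungr}.) You also locate the main difficulty in the wrong place: the propagation from $[\Omega_1(Q),x]=1$ to $[Q,x]=1$ needs no filtration or $H^1$-vanishing argument and no height hypothesis at all --- it is precisely the classical theorem you quoted, valid for every finite $p$-group when $p$ is odd (\cite[Kap.~IV, Satz~5.12]{hupp1:grp}, the very result the paper invokes in the proof of Lemma~\ref{lemma:cenin}), and it is where the oddness of $p$ enters. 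With the centralization step repaired as above, Frobenius's criterion yields the theorem directly, and the minimal-counterexample reductions in your first paragraph become unnecessary.
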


For $p$-central finite groups Theorem~A was already proved in 
\cite[Thm.~2.4]{th:pcenp}.

\begin{example}
\label{ex:2cen}
For an odd prime number $r$ the finite group $G=\SL_2(r)$ is $2$-central. Hence Theorem~A does not hold for $p=2$.
\end{example}

The first remarkable result for finite $p$-central $p$-groups is the fact 
that for a finite $p$-central $p$-group $P$, $p$ odd, 
$P/\Omega_i(P)$ is also $p$-central, i.e.,
$(\Omega_i(P))_{i\geq1}$ is an ascending central series of $P$.
In this form one can find this result in a paper by J.~Buckley (see \cite{buck:pcen}).
However, in his first book which appeared three years earlier B.~Huppert
cited an unpublished result of J.~G.~Thompson (see \cite[Hilfssatz~12.2]{hupp1:grp}) 
reproducing a proof he is giving credit to N.~Blackburn which is also implying the
just mentioned statement.
For this reason we refer to this result as the
Blackburn-Buckley-Thompson theorem.
For $p=2$ it remains true if one requires that $\Omega_2(P)$
is contained in the center of $P$ (see \cite{mann:4cen}).
In Section~\ref{ss:hom} we will prove the following generalization.

\begin{thmB}
Let $k$ and $i$ be positive integers such that $k\leq p-2$ or $k=p-1$
and $i\geq 2$. Let $P$ be a $p^i$-central pro-$p$ group of height
$k$ such that the torsion elements of $P$ form a finite $p$-group. Then,
for all $j\geq 1$, $P/\Omega_j(P)$ is also a $p^i$-central pro-$p$
group of height $k$. Moreover 
\begin{equation}
\label{eq:omega}
\Omega_j(P)=\{ x\in P\mid x^{p^j}=1\}.
\end{equation}
\end{thmB}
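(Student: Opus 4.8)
The plan is to reduce everything to a statement about a Lie algebra (or graded Lie ring) attached to $P$ via a $p$-adic logarithm, so that the numerical condition $k\le p-2$ (or $k=p-1$, $i\ge2$) becomes exactly the condition that makes the Baker–Campbell–Hausdorff series converge and behave well modulo the relevant powers of $p$. Concretely, I would first treat the case $P$ a finite $p$-group; the pro-$p$ case with finite torsion subgroup follows by passing to a suitable finitely generated open subgroup and a limit argument, since $\Omega_j$ only involves torsion. So assume $P$ is a finite $p$-central $p$-group of height $k$ in the relevant range.

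\medskip

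First I would record the structural consequences of the hypothesis. The condition $\Omega_i(P)\le\zeta_k(P)$ with $k\le p-1$ forces the nilpotency class of $\Omega_i(P)$ to be small relative to $p$; in particular $\Omega_i(P)$ is a (uniformly) powerful $p$-group of small class, hence has a well-defined $\log$ with values in a Lie lattice on which BCH converges because $k<p$. The key identity to establish is precisely the displayed equation~\eqref{eq:omega}: that the \emph{set} of elements killed by $p^j$ is already a subgroup and coincides with $\Omega_j(P)$. For this I would argue by induction on $j$, using the Blackburn–Buckley–Thompson-type fact (available here since $k\le p-2$, or $k=p-1$ with the extra hypothesis $i\ge2$) that $\Omega_1(P)\le\zeta_1(P)$-type centrality propagates, together with the Hall–Petrescu collapsing formula: for $x,y$ of order dividing $p^j$, the "error term" $(xy)^{p^j}(x^{p^j})^{-1}(y^{p^j})^{-1}$ lies in $\gamma_2(P)^{p^{j}}\cdots$, and the commutator weights needed to make it trivial are controlled by $k<p$. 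The extra requirement $i\ge2$ when $k=p-1$ is exactly what is needed to absorb the one extra binomial coefficient $\binom{p}{1}=p$ that would otherwise obstruct the argument at the top class.

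\medskip

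Once \eqref{eq:omega} is in hand, the statement that $P/\Omega_j(P)$ is again $p^i$-central of height $k$ is nearly formal. An element of order dividing $p^i$ in $P/\Omega_j(P)$ lifts to an $x\in P$ with $x^{p^i}\in\Omega_j(P)$, i.e. $x^{p^{i+j}}=1$ by \eqref{eq:omega}; so $x\in\Omega_{i+j}(P)$. I then need $x\Omega_j(P)\in\zeta_k(P/\Omega_j(P))$, i.e. $[\,x,\,{}_k P\,]\le\Omega_j(P)$. Using that $\Omega_{i+j}(P)$ is powerful of small class, $x^{p^j}\in\Omega_i(P)\le\zeta_k(P)$, so $[x^{p^j},{}_k P]=1$; and in a powerful $p$-group the map $g\mapsto g^{p^j}$ is "almost" injective on the relevant commutator quotients, so $[x,{}_kP]^{p^j}=1$ up to higher terms, giving $[x,{}_kP]\subseteq\Omega_j(P)$ after another application of \eqref{eq:omega}. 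The routine bookkeeping is the commutator-collection estimate; I would suppress it.

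\medskip

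The main obstacle I anticipate is the boundary case $k=p-1$ with $i\ge2$: there the naive Lie-theoretic / BCH argument sits right at the edge where the denominators $p$ appear, and one must show that demanding elements have order $p^i$ with $i\ge2$ (rather than $p$) buys back exactly the divisibility by $p$ that is lost. I expect this to require a careful Hall–Petrescu expansion keeping track of the $p$-adic valuation of each term $\gamma_r(P)^{\binom{p^i}{r}}$ for $r\le p$, rather than the cruder estimates that suffice when $k\le p-2$. Everything else — the reduction from pro-$p$ to finite, and the deduction of "$P/\Omega_j(P)$ is $p^i$-central of height $k$" from \eqref{eq:omega} — should be straightforward once the order-$p^j$ elements are known to form the subgroup $\Omega_j(P)$.
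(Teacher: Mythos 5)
Your sketch has genuine gaps, and the central one is structural. From $\Omega_i(P)\leq\zeta_k(P)$ with $k\leq p-1$ you may conclude that $\Omega_i(P)$ has nilpotency class at most $p-1$, hence is a \emph{regular} $p$-group; you may \emph{not} conclude that it is powerful, let alone uniformly powerful (an extraspecial group of exponent $p$ already has class $2$ and trivial $p$-th power subgroup, and a torsion group can never be uniform). Both places where you invoke powerfulness therefore collapse: the convergent $\log$/BCH picture for $\Omega_{i}(P)$ or $\Omega_{i+j}(P)$, and the decisive step ``$[x^{p^j},_{(1)}P]=1$ plus near-injectivity of $g\mapsto g^{p^j}$ gives $[x,_{(k)}P]^{p^j}=1$ up to higher terms''. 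That step is exactly the crux of the theorem, and what is actually needed there is a power-commutator estimate of the shape $[\Omega_i(P),_{(r)}P]^p\leq[\Omega_{i-1}(P),_{(r)}P]\,[\Omega_i(P),_{(r+p-1)}P]$ (the paper imports this from Fern\'andez-Alcober--Gonz\'alez-S\'anchez--Jaikin-Zapirain and it drives the entire proof); your proposal offers no substitute for it, and Lazard/BCH for class $<p$ cannot supply one because the ambient group $P$, whose conjugation action you must control, has unbounded class.

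The architecture of the argument is also circular, and the announced reduction to the finite case is not available. You want to prove \eqref{eq:omega} first, by induction on $j$, and then deduce ``$P/\Omega_j(P)$ is $p^i$-central of height $k$'' formally; but the passage from $j-1$ to $j$ in \eqref{eq:omega} needs the elements of order $p$ in $P/\Omega_{j-1}(P)$ to lie in a subgroup of class $<p$, i.e.\ it needs the quotient statement at stage $j-1$ --- your appeal to ``Blackburn--Buckley--Thompson-type propagation'' is an appeal to the theorem being proved, since $\Omega_j(P)$ for $j>i$ is not assumed to lie in $\zeta_k(P)$ and its class is not a priori bounded by $p-1$. Likewise, finite quotients (or images modulo open normal subgroups) of a $p^i$-central pro-$p$ group of height $k$ need not inherit that property --- that quotients can fail it is the whole point of the theorem and of the paper's counterexample $C_{p^{i+1}}\ltimes M$ --- so ``pass to a finitely generated open subgroup and take limits'' does not reduce the pro-$p$ statement to the finite one. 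The paper instead works directly in $P$: it reduces to $j=1$, uses the finiteness of the torsion subgroup $T$ only to obtain $[T,_{(m)}P]=1$ for some $m$, and then runs a \emph{reverse} induction on the commutator weight $r$ to show $[\Omega_{i+1}(P),_{(r)}P]\leq\Omega_1(P)$ for all $r\geq k$ via the displayed power-commutator lemma, with a two-step bootstrap (first into $\Omega_2(P)$, then into $\Omega_1(P)$) handling the boundary case $k=p-1$, $i\geq 2$; the only regularity input is $\Omega_1(P)=\{x\mid x^p=1\}$. None of these ingredients --- the key lemma, the reverse induction anchored at $[T,_{(m)}P]=1$, or the bootstrap at $k=p-1$ --- is present in your sketch, so as it stands the proposal does not constitute a proof.
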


Some families of finite $p$-groups - like powerful or regular groups - 
have a nice power structure. 
In particular, the index of the agemo subgroup (the subgroup being
generated by the $p$-powers or, simply, $P^p$) is equal to the number of elements of order $p$ (see \cite{gust:omegapo}, \cite{hall:pgr}). 
For an arbitrary finite $p$-group $P$ the index $|P:P^p|$ is bounded in terms
of $p$ and  $|\Omega_1(P)|$, i.e., there exist a function 
$f$ such that $|P:P^p|\leq f(p,|\Omega_1(P)|)$ 
(see \cite{jon:agemo}, \cite{mann:pow}). 
In a finite $p$-central $p$-group $P$ of height $p-2$ the index of the agemo subgroup is bounded just by the cardinality of $|\Omega_1(P)|$
(see \S\ref{ss:agemo}).

\begin{thmC}
Let $k,i$ be positive integers such that $k\leq p-2$
or $k=p-1$ and $i\geq 2$, and let $P$ be a $p^i$-central $p$-group of height $k$. 
Then
\begin{equation}
\label{eq:ineq}
|P:P^p|\leq
|\Omega_1(P)|.
\end{equation} 
Moreover, if $P^p=\{x^p\mid x\in P\}$, then equality holds in \eqref{eq:ineq}.
\end{thmC}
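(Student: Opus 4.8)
The plan is to exploit the map $\phi\colon P/P^p \to \Omega_1(P)$ induced by the $p$-th power map, whose existence requires control over the power structure of $P$. Concretely, by Theorem~B the group $P$ is $p^i$-central of height $k$ with $\Omega_j(P)=\{x\in P\mid x^{p^j}=1\}$ for all $j$, and moreover $P/\Omega_1(P)$ is again $p^i$-central of height $k$; iterating, the subgroups $\Omega_j(P)$ form an ascending central-type series with each successive quotient $\Omega_{j+1}(P)/\Omega_j(P)$ being elementary abelian (being killed by $p$ inside a group where ``order dividing $p$'' is a subgroup). The first step is therefore to establish that the $p$-th power map sends $\Omega_{j+1}(P)$ into $\Omega_j(P)$ and induces, after passing to quotients, well-defined homomorphisms; the hypothesis $k\le p-2$ (or $k=p-1$, $i\ge 2$) is exactly what makes these power maps behave additively, via the Hall--Petrescu / Lazard correspondence machinery that underlies Theorem~B.

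Next I would set up the key homomorphism $\phi\colon P/P^p\to P$, $\,xP^p\mapsto x^p$. To see this is well-defined and a homomorphism one uses that $P^p$ is generated by $p$-th powers together with the fact that in these groups $(xy)^p \equiv x^p y^p$ modulo a subgroup that is itself a $p$-th power, which one pushes through by induction along the series $(\Omega_j(P))_j$. The image of $\phi$ is $P^p$ (or $\{x^p\mid x\in P\}$ in the case where this set is already a subgroup), and the crucial claim is that $\image(\phi)\subseteq$ ... more precisely that the image lands in a subgroup on which we have size control. The kernel of $\phi$ is exactly $\Omega_1(P)/P^p$ when $\Omega_1(P)\supseteq P^p$, which need not hold in general — so instead I would argue with $\Omega_1(P)P^p/P^p \cong \Omega_1(P)/(\Omega_1(P)\cap P^p)$ being the kernel. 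Counting gives
\begin{equation}
|P:P^p| = |P/P^p| = |\kernel\phi|\cdot|\image\phi| = |\Omega_1(P)/(\Omega_1(P)\cap P^p)|\cdot |\image\phi|.
\end{equation}
Since $\image\phi\subseteq \Omega_1(P)\cap P^p$ is false in general, the real input must be the reverse: one shows $|\image\phi|\le |\Omega_1(P)\cap P^p|$ by identifying $\image\phi$ with a section of $\Omega_1(P)$, or equivalently that the $p$-power map, iterated, eventually dies, and a dimension count on the graded pieces $\Omega_{j+1}/\Omega_j$ telescopes.

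The cleanest route, which I would adopt, is induction on $|P|$ using the quotient $\bar P = P/\Omega_1(P)$: by Theorem~B this is again $p^i$-central of height $k$, so by induction $|\bar P:\bar P^{\,p}|\le|\Omega_1(\bar P)|$. Now $\bar P^{\,p}$ is the image of $P^p$, so $|\bar P:\bar P^{\,p}| = |P:P^p\Omega_1(P)|$, and $\Omega_1(\bar P) = \Omega_2(P)/\Omega_1(P)$ by the power description \eqref{eq:omega}. Combining,
\begin{equation}
|P:P^p| = |P:P^p\Omega_1(P)|\cdot|P^p\Omega_1(P):P^p| \le |\Omega_2(P)/\Omega_1(P)|\cdot|\Omega_1(P):\Omega_1(P)\cap P^p|.
\end{equation}
It then remains to check $|\Omega_2(P)/\Omega_1(P)|\le|\Omega_1(P)\cap P^p|$, i.e. that the $p$-th power map $\Omega_2(P)\to\Omega_1(P)$ has image of index at most $|\Omega_1(P)\cap P^p|/|{\rm (something)}|$ — concretely, that it surjects onto a subgroup contained in $P^p$, which it does since its values are literally $p$-th powers, and that its kernel is $\Omega_1(P)$. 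That forces $|\Omega_2(P):\Omega_1(P)| = |\,p\text{-th powers from }\Omega_2\,| \le |\Omega_1(P)\cap P^p|$, closing the induction. For the equality statement, when $P^p=\{x^p\mid x\in P\}$ every inequality above becomes an equality because the relevant power maps are then surjective onto $P^p$ rather than merely having image generating it; one tracks this through the induction.

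The main obstacle I anticipate is the very first step: proving that the $p$-th power map induces genuine \emph{homomorphisms} between the successive quotients $\Omega_{j+1}(P)/\Omega_j(P)\to\Omega_j(P)/\Omega_{j-1}(P)$ (and between $P/P^p$ and $P$). This is where the numerical hypothesis $k\le p-2$, respectively $k=p-1$ with $i\ge2$, is essential and where one must invoke the commutator-collection estimates (Hall--Petrescu) already exploited in the proof of Theorem~B — outside that range the power map is only a ``quasi-homomorphism'' and the clean index bound fails. Everything after that is bookkeeping with orders of the graded pieces.
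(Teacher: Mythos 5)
Your ``cleanest route'' (induction on $|P|$ through $\bar P=P/\Omega_1(P)$, using Theorem~B to know that $\bar P$ is again $p^i$-central of height $k$ and that $\Omega_1(\bar P)=\Omega_2(P)/\Omega_1(P)$) does reduce the inequality correctly to the single claim $|\Omega_2(P):\Omega_1(P)|\leq|\Omega_1(P)\cap P^p|$, and the equality statement would follow from the same claim with equality. But at exactly this point you assert that the $p$-th power map $\Omega_2(P)\to\Omega_1(P)$ behaves like a homomorphism ``with kernel $\Omega_1(P)$'', and that assertion is the entire difficulty, not bookkeeping. What the counting needs is injectivity of the induced map on cosets: for $x,y\in\Omega_2(P)$, $x^p=y^p$ should force $x^{-1}y\in\Omega_1(P)$. (Well-definedness is comparatively easy: for $z\in\Omega_1(P)$ the Hall--Petrescu error terms lie in $\gamma_2(\langle x,z\rangle)^p\gamma_p(\langle x,z\rangle)\leq\Omega_1(P)^p\,[\Omega_1(P),_{(p-1)}P]=1$.) Injectivity, however, does not follow from regularity --- $\Omega_2(P)$ only lies in $\zeta_{2k}(P)$, so its nilpotency class can exceed $p-1$ --- nor from Theorem~B, and applying Hall--Petrescu with both $x,y\in\Omega_2(P)$ leaves error terms in $[\Omega_2(P),\Omega_2(P)]^p\gamma_p(\langle x,y\rangle)$ over which the $\Omega$-filtration gives you no control. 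Your closing paragraph explicitly flags this as ``the main obstacle'' and defers it to unspecified commutator-collection estimates, so the argument as written has a genuine gap; the earlier musings about a map $P/P^p\to\Omega_1(P)$, $xP^p\mapsto x^p$, are not viable either (that map is not well defined) but you abandoned them anyway.

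This missing step is precisely where the paper leaves the $\Omega$-series and introduces the finer characteristic series $M_n(P)$ of \eqref{eq:Mser}, which interpolates between $1$ and $\Omega_1(P)$ via the commutator subgroups $[\Omega_1(P),_{(p-n-1)}P]$ and is then continued upwards by $p$-th roots. Proposition~\ref{prop-1} shows that modulo this filtration one has $(xy)^p\equiv x^py^p\pmod{M_{m-p}(P)}$, so that on the associated graded object $m_\bullet(P)$ the $p$-power map becomes a genuine $\F_p$-linear operator $t_\bullet$ of degree $1-p$; the theorem then follows from the linear-algebra identity $|m_\bullet(P):t_\bullet.m_\bullet(P)|=|\Ann(t_\bullet)|$ together with $\Ann(t_\bullet)=m_\bullet(\Omega_1(P))$ and $t_\bullet.m_\bullet(P)\subseteq m_\bullet(P^p)$, with equality when every element of $P^p$ is a $p$-th power. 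To close your induction you would need an analogous refinement (for instance, filtering $\Omega_2(P)$ by the $M_n(P)$) to prove the injectivity claim --- at which point you would essentially have reconstructed the paper's proof rather than found a shorter one.
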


In \cite{hp:pcen}, S.~Priddy and H-W.~Henn showed
that a finite $p$-central $p$-group $P$, $p$ odd\footnote{For $p=2$ one has to require
that $(\Omega_i(P))_{i\geq 1}$ is an ascending central series.}, is a {\it Swan group}, 
i.e.,
if $G$ is a finite group containing $P$ as Sylow $p$-subgroup, then
the restriction map in cohomology
\begin{equation}
\label{eq:swan}
\rst^G_{N_G(P)}\colon H^\bullet(G,\F_p)\longrightarrow H^\bullet(N_G(P),\F_p)
\end{equation}
is an isomorphism of $\N_0$-graded, connected, anti-commutative $\F_p$-algebras.
As pointed out by J.~Th\'evenaz in \cite{thev:most} 
a finite $p$-group $P$ is a Swan group if, and only if, $N_G(P)$ controls
$p$-fusion in $G$ for all finite groups $G$ with $P\in\Syl_p(G)$.
This property of $p$-central groups seems to be very particular, and one cannot expect
that it holds in a much more general context (see Ex.~\ref{ex:psl2}).
Nevertheless, there are two weaker properties which also hold for
$p$-central groups of small height.

\begin{example}
\label{ex:psl2}
Let $p\geq 5$, $n\geq 2$ and put $G=\SL_2(\Z/p^n.\Z)$.
Then $P\in\Syl_p(G)$ is $p$-central of height $3$, but
$N_G(P)$ does not control $p$-fusion in $G$.
Hence, by \cite{mis:modp}, $P$ is not a Swan group.
\end{example}

A finite $p$-group $P$ will be called to {\it determine $p$-nilpotency locally}
if the following holds: Let $G$ be a finite group containing $P$ as Sylow $p$-subgroup
such that $N_G(P)$ is $p$-nilpotent. Then $G$ is also $p$-nilpotent.
From J.~Tate's nilpotency criterion (see \cite{tat:nil}) it
follows that Swan groups determine $p$-nilpotency locally.
For finite $p$-central $p$-groups of small height we will show the following (see \S\ref{s:dnl}).

\begin{thmD}
Let $p$ be odd, and let $P$ be a finite $p$-central $p$-group of height $p-1$.
Then $P$ determines $p$-nilpotency locally.
\end{thmD}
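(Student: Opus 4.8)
\emph{Proof proposal.} The plan is to argue by contradiction: assume the statement fails and let $G$ be a counterexample of minimal order, so that $P\in\Syl_p(G)$ is $p$-central of height $p-1$, $N_G(P)$ is $p$-nilpotent, but $G$ is not $p$-nilpotent. I begin with two observations. First, if $S\leq R$ are finite $p$-groups then $\zeta_k(R)\cap S\leq\zeta_k(S)$ (easy induction on $k$), so $\Omega_1(S)\leq\Omega_1(R)\cap S\leq\zeta_k(R)\cap S\leq\zeta_k(S)$; hence every subgroup, and every Sylow $p$-subgroup of every subgroup of $G$, is again $p$-central of height $p-1$. Second, if $H<G$ with $P\leq H$, then $N_H(P)=N_G(P)\cap H$ is $p$-nilpotent, so $H$ satisfies the hypothesis of the theorem and minimality of $G$ forces $H$ to be $p$-nilpotent: \emph{every proper subgroup of $G$ containing $P$ is $p$-nilpotent.}

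Next I would produce a non-trivial normal $p$-subgroup. By Frobenius' normal $p$-complement theorem there is a non-trivial $p$-subgroup $Q$ of $G$ with $N_G(Q)$ not $p$-nilpotent; choose $Q\leq P$ with $|Q|$ maximal among these, and (after replacing $Q$ by a suitable $G$-conjugate inside $P$) so that $N_P(Q)\in\Syl_p(N_G(Q))$. Since $N_G(P)$ is $p$-nilpotent, $Q\neq P$, hence $Q<N_P(Q)$; by maximality $N_G(N_P(Q))$ is $p$-nilpotent, and therefore so is $N_{N_G(Q)}(N_P(Q))\leq N_G(N_P(Q))$. Thus, were $Q$ not normal in $G$, the proper subgroup $N_G(Q)$ would have $p$-central Sylow $p$-subgroup $N_P(Q)$ of height $p-1$ and $p$-nilpotent Sylow normalizer, so minimality would make it $p$-nilpotent, a contradiction. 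Hence $Q\trianglelefteq G$, so $V:=O_p(G)\neq 1$; and since $G$ is $p$-nilpotent if and only if $G/O_{p'}(G)$ is, minimality also forces $O_{p'}(G)=1$.

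I would then dispose of the fusion via a normal $p$-complement criterion. By Thompson's normal $p$-complement theorem ($p$ odd), $G$ has a normal $p$-complement provided $N_G(J(P))$ and $C_G(Z(P))$ do, where $J(P)$ is the Thompson subgroup. As $J(P)$ and $Z(P)$ are characteristic in $P$, both subgroups contain $P$; if both were proper they would be $p$-nilpotent by the first step, whence $G$ would be $p$-nilpotent. So one of two cases holds: either $Z(P)\leq\Zen(G)$, or $J(P)\trianglelefteq G$.

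It remains to rule out these two possibilities, and this is where the height hypothesis must be used and where I expect the essential difficulty to lie. In either case one is reduced to the faithful action of $G$ on an elementary abelian section contained in $\zeta_{p-1}(P)$ --- namely $\Omega_1(Z(P))$ in the first case, and $\Omega_1(Z(J(P)))$ (via the Thompson replacement apparatus) in the second. The point of the hypothesis is that membership in $\zeta_{p-1}(P)$ forces $P$ to act on such a section with $(g-1)^{p-1}=0$ for every $g\in P$, and that $\Omega_1(P)$, having nilpotency class $<p$, is a regular $p$-group of exponent $p$. I would use these restrictions, via a Hall--Higman/Glauberman-type analysis of the $p'$-part of the action (the step in which the precise bound $k=p-1$, and not larger, must be exploited), to conclude that the relevant characteristic subgroup is central in $G$, in particular that $\Omega_1(Z(P))\leq\Zen(G)$. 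One then descends modulo the central $p$-subgroup $W:=\Omega_1(Z(P))$: the Sylow $p$-subgroup $P/W$ of $G/W$ has $p$-nilpotent normalizer $N_G(P)/W$, and --- provided $P/W$ is again $p$-central of height $p-1$, which is clear when $P$ has class $\leq p-1$ but must be argued by hand in general, since Theorem~B is unavailable for $i=1$ --- minimality shows $G/W$ is $p$-nilpotent, whence $G$ is $p$-nilpotent (a central $p$-subgroup lifts $p$-nilpotency, by a Schur--Zassenhaus argument), contradicting the choice of $G$. The two genuine obstacles are thus: (i) converting $\Omega_1(P)\leq\zeta_{p-1}(P)$ into a usable constraint on the $G$-action on $\Omega_1(Z(P))$ and $\Omega_1(Z(J(P)))$, and (ii) the descent lemma asserting that $p$-centrality of height $p-1$ passes to the quotient $P/\Omega_1(Z(P))$.
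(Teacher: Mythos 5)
Your minimal-counterexample frame (every proper subgroup containing $P$ is $p$-nilpotent, the Frobenius argument giving $O_p(G)\neq 1$ and $O_{p'}(G)=1$, and Thompson's factorization forcing either $\Zen(P)\leq\Zen(G)$ or $J(P)\trianglelefteq G$) is correct as far as it goes, but the proof stops exactly where the hypothesis of the theorem has to be used: the two ``obstacles'' you name at the end are the theorem, and the plan you sketch for them does not go through. Concretely: (i) a minimal counterexample to a local $p$-nilpotency criterion need not be $p$-soluble or $p$-constrained, so a Hall--Higman/Glauberman-type analysis of the $p'$-part of the action on $\Omega_1(\Zen(P))$ or $\Omega_1(\Zen(J(P)))$ has no footing; the Jordan-block restriction of this kind that the paper actually proves (Corollary~\ref{cor:jor}, Lemma~\ref{lemma:crit}) is valid only for $p$-soluble groups, which is why Theorem~\ref{thm:sand} (Theorem E) is confined to that case. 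Worse, even in the $p$-soluble setting the constraint available at height $p-1$, namely $(\phi(g)-\iid)^{p-1}=0$, does not exclude Jordan blocks of size $p-1$, so it produces no contradiction; the module-theoretic method needs height $p-2$, and cannot be expected to prove Theorem D as stated. (ii) The descent step, that $P/\Omega_1(\Zen(P))$ is again $p$-central of height $p-1$, is not covered by Theorem B (which excludes $i=1$, $k=p-1$) and you give no argument for it. So in both branches the decisive step is missing.

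The paper's proof is entirely different and essentially formal once one external result is quoted: by \cite{wei:dnl}, a finite $p$-group containing no subgroup isomorphic to $Y_p(m)$ (the pull-backs built from $\boC_{p^m}$ and $\boC_p\wr\boC_p$ in \S\ref{s:dnl}) determines $p$-nilpotency locally. Since subgroups of a $p$-central group of height $p-1$ are again $p$-central of height $p-1$ (your own first observation), and $Y_p(m)$ is not --- it maps onto $\boC_p\wr\boC_p$, which has class $p$ and whose base group consists of elements of order $p$ not all lying in $\zeta_{p-1}$ --- $P$ contains no copy of any $Y_p(m)$, and Theorem~\ref{thm:dnl} follows. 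The hard local-to-global group theory is thus outsourced to the theorem of \cite{wei:dnl}; a self-contained argument along the lines you propose would in effect have to reprove that result, and the route you indicate (Hall--Higman at height $p-1$) is not the way to do it.
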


Finite $p$-central $p$-groups of small height possess a `Swan group like' property
which concerns their embeddings into $p$-soluble groups.
For simplicity we say that a $p$-soluble group $G$ is a
{\it $p^\prime p p^\prime$-sandwich group}, if
$O_p(G/O_{p^\prime}(G))$ is a Sylow $p$-subgroup of $G/O_{p^\prime}(G)$.
In Section~\ref{ss:sand} we will show the following.

\begin{thmE}
Let $p$ be odd, and let $G$ be a finite $p$-soluble group
containing a $p$-central Sylow $p$-subgroup $P$ of height $p-2$.
Then $G$ is a $p^\prime p p^\prime$-sandwich group.
In particular, $N_G(P)$ controls $p$-fusion in $G$.
\end{thmE}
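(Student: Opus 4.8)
The plan is to reduce to the case $O_{p'}(G)=1$, to show that such a $G$ must have a normal Sylow $p$-subgroup, and then to read off the control of $p$-fusion.

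First I would carry out the standard reductions. Put $\bar G=G/O_{p'}(G)$ and let $\bar P$ be the image of $P$; since $P\cap O_{p'}(G)=1$ the quotient map restricts to an isomorphism $P\cong\bar P$, so $\bar P$ is again $p$-central of height $p-2$ and $O_{p'}(\bar G)=1$. By definition $G$ is a $p^\prime pp^\prime$-sandwich group precisely when $O_p(\bar G)\in\Syl_p(\bar G)$, i.e. when $\bar P$ is normal in $\bar G$; and because $N_G(P)$ maps onto $N_{\bar G}(\bar P)$ and $p$-fusion in $G$ agrees with $p$-fusion in $\bar G$, once $\bar P\trianglelefteq\bar G$ every $G$-fusion between subsets of $P$ is already visible in $N_G(P)$, so the ``in particular'' part follows from the sandwich statement. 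Hence we may assume $O_{p'}(G)=1$; then $Q:=O_p(G)=F(G)$ and $C_G(Q)\le Q$ by the Hall--Higman lemma, and it remains to prove $Q=P$.

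Suppose not, and pick a counterexample $G$ of least order. Since $O_p(G/Q)=1$ and $G/Q\neq 1$ is $p$-soluble we get $O_{p'}(G/Q)\neq 1$; writing $R/Q=O_{p'}(G/Q)$ gives $R\trianglelefteq G$, $R=Q\rtimes S$ with $1\neq S$ a $p^\prime$-group, and $C_S(Q)\le C_G(Q)\cap S=1$, so $S$ acts faithfully on $Q$. Combining the Frattini argument for $S\in\mathrm{Hall}_{p'}(R)$, minimality of $G$ applied to proper overgroups such as $N_G(S)$, the fact that subgroups and the pertinent quotients of $p$-central $p$-groups of height $\le p-2$ are again of this kind, and Theorem~B, one reduces to the configuration $G=Q\rtimes(S\rtimes\langle x\rangle)$ with $x$ of order $p$, $C_{S\rtimes\langle x\rangle}(Q)=1$, and — crucially — $[Q,x,\dots,x]=1$ with $p-2$ repetitions of $x$; this last fact is exactly what the height hypothesis buys, via $x\in\Omega_1(P)\le\zeta_{p-2}(P)$.

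Then one tests the action of $S\rtimes\langle x\rangle$ on a suitable elementary abelian section $V$ of $Q$ on which $x$ acts nontrivially: $S$ still acts faithfully on $V$ by coprimality, while $[Q,x,\dots,x]=1$ forces $(x-1)^{p-2}$ to annihilate $V$, so every Jordan block of $x$ on $V$ has size $\le p-2$. Decomposing $V$ into homogeneous $S$-components, $x$ either permutes $p$ of them cyclically, which produces a Jordan block of size $p$, or it fixes one on which, by Hall--Higman's Theorem~B (for an element of order exactly $p$ there is no Fermat or $\SL_2$ exception), its minimal polynomial has degree $\ge p-1$. Either way $(x-1)^{p-2}\neq 0$ on $V$, a contradiction; hence $Q=P$, $\bar G$ is $p$-closed, and $N_G(P)$ controls $p$-fusion in $G$. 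The main obstacle is the middle step: carrying out the descent from a minimal counterexample to the clean form $G=Q\rtimes(S\rtimes\langle x\rangle)$ with an honest element $x$ of order $p$ in hand — in particular the case $\Omega_1(P)\le Q$, where one first passes to $G/\Omega_1(Q)$ (noting $\Omega_1(Q)=\Omega_1(P)$ there and invoking Theorem~B to keep the new Sylow subgroup $p$-central of height $\le p-2$) — and then locating inside $Q$ a chief section on which $x$ acts \emph{nontrivially} while still inheriting the short annihilator $(x-1)^{p-2}$; the case in which $x$ acts trivially on every elementary abelian section of $Q$ must be treated by a separate coprime-action argument using the faithful action of $S$. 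Once the module $V$ is available, Hall--Higman's Theorem~B finishes the proof, and it is precisely the inequality $k\le p-2<p-1$ that makes the two constraints on the Jordan blocks of $x$ incompatible.
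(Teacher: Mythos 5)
Your argument is only complete in the configuration where you already hold an element $x$ of order $p$ in $P\setminus O_p(G)$ acting nontrivially on an elementary abelian $G$-section of $Q=O_p(G)$; everything before that — the descent from a minimal counterexample to $G=Q\rtimes(S\rtimes\langle x\rangle)$ — is exactly the content of the theorem, and you yourself flag it as "the main obstacle" without carrying it out. The critical case is $\Omega_1(P)\leq O_p(G)$: then no element of order $p$ of $P$ lies outside $Q$ at all (indeed, in your clean configuration $x\in\Omega_1(P)\leq\zeta_{p-2}(P)$ forces $\Omega_1(P)\not\leq Q$, so the deferred case is the generic one). Your proposed fix — pass to $G/\Omega_1(Q)$, invoke Theorem~B and minimality — does not close the loop: $O_{p'}(G/\Omega_1(Q))$ may be nontrivial even though $O_{p'}(G)=1$, and the sandwich property of the quotient then only yields a normal subgroup $\Omega_1(Q)S_0$ with $S_0\neq 1$ a $p'$-group acting faithfully on $\Omega_1(Q)$; ruling this out needs a further argument (essentially Lemma~\ref{lemma:cenin} and more), which you do not supply. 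Likewise, the case "$x$ acts trivially on every elementary abelian section of $Q$" cannot be settled by a "coprime-action argument", since $x$ is a $p$-element; the correct tool is that the stabilizer of a $G$-chief series of $\Omega_1(Q)$ induces a $p$-group of automorphisms, combined with $C_G(\Omega_1(O_p(G)))\leq O_p(G)$. A smaller point: your parenthetical that elements of order exactly $p$ admit no Hall--Higman exception is false ($\SL_2(3)$ on its natural module, $p=3$, gives minimal polynomial $(X-1)^{p-1}$), although the bound $\geq p-1$ you actually use is the correct exception-permitting statement.

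For contrast, the paper avoids any descent: after reducing to $O_{p'}(G)=1$, it proves $C_G(\Omega_1(O_p(G)))\leq O_p(G)$ (Lemma~\ref{lemma:cenin}), lets $G$ act on the graded module $M=\gr^{(1)}_\bullet(\Omega_1(O_p(G)))$, notes that the height hypothesis gives $(\phi_M(g)-\iid_M)^{p-2}=0$ for \emph{every} $g\in P$, of any order, and then applies the Jordan-block theorem for semisimple modules of $p$-soluble groups (Corollary~\ref{cor:jor}, via Isaacs and Tiep--Zalesskii) to an order-$p$ element of the \emph{image} $\phi_M(P)$ rather than of $P$ (Lemma~\ref{lemma:crit}); this is precisely what dissolves your obstacle, since one never needs an order-$p$ element of $P$ outside $O_p(G)$. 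Your Hall--Higman route could in fact be repaired in the same spirit: any $x\in P\setminus O_p(G)$ acts nontrivially on some chief factor of $G$ inside $\Omega_1(O_p(G))$ (stability-group argument), the bound $(x-1)^{p-2}=0$ persists there because $\Omega_1(O_p(G))\leq\zeta_{p-2}(P)$, and Hall--Higman Theorem~B for a $p$-element of order $p^n$ gives minimal polynomial degree at least $p^n-p^{n-1}\geq p-1$, a contradiction for elements of arbitrary order; but as written your proposal contains no complete version of this step. Finally, the paper deduces the fusion statement from Theorem~\ref{thm:sand} via Mislin's theorem (Proposition~\ref{prop:ppp}), whereas you assert directly that normality of the Sylow subgroup modulo $O_{p'}(G)$ makes all fusion visible in $N_G(P)$; that is true but itself requires a (standard) argument you omit.
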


The mod $p$ cohomology ring $H^\bullet(G)=H^\bullet(G,\F_p)$ of a finite $p$-central group $G$ is much easier to analyze than for an arbitrary finite group.
In  \cite{brothenn:cm}, C.~Broto and H-W.~Henn showed that 
$H^\bullet(G)$ is Cohen-Macaulay.
Thus by a result of
D.~J.~Benson and J.~F.~Carlson (see \cite{bencar:poin}) its Hilbert series
\begin{equation}
\label{eq:hilb}
h_G(t)=\textstyle{\sum_{k\in\N_0} \dim_{\F_p}(H^k(G)). t^k}
\end{equation}
satisfies some functional equation. In case that $p$ is odd and
$G$ satisfies some additional property - the {\it $\Omega_1$-extension property}
(see Remark~\ref{rem:o1ep}) -
one can determine the $\F_p$-algebra structure of $H^\bullet(G)$
explicitly modulo some finite-dimensional $\F_p$-algebra which satisfies
Poincar\'e duality (see \cite{th:pcenp}). 

A finite $p$-central $p$-group $P$ of height $2$ may have maximal elementary
abelian subgroups of different order, and thus, its mod $p$-cohomology ring will not be Cohen-Macaulay (see \cite{bencar:poin}).
In order to know whether one can extend the result of 
C.~Broto and H-W.~Henn to $p$-central $p$-groups of height $p-2$,
it would be interesting to investigate the following question.

\begin{quesF}
Let $p$ be odd, and let $P$ be a finite
$p$-central $p$-group of height $p-2$ such that $\Omega_k(P)/\Omega_{k-1}(P)$
is abelian for all $k\geq 1$.
Is $H^\bullet(P)$ Cohen-Macaulay?
\end{quesF}

{\bf Acknowledgement.}
The authors would like to thank A.~E.~Zalesskii for some very helpful discussions.
\vskip12pt 

{\it Notations and basic definitions.} We use the standard
notations in group theory. Multiple commutators will always supposed to be left normed.
We simply write $[N,_{(k)}M]$ for the commutator
$[N,M,\ldots ,M]$ with $M$ appearing $k$-times. 
By $G^{p^j}$ we denote the (closed) subgroup of the (topological) group $G$ which is
(topologically) generated by the $p^j$-powers of $G$; $\Omega_i(G,p)$ will
denote the (closed) subgroup generated by the elements of order less or equal to $p^i$. For a (closed) normal subgroup $N$ of $G$ we denote by $N^{1/p}$ the (closed)
normal subgroup generated by the set $\{ x\in G\mid x^p\in N\}$.

\section{$p$-Central $p$-groups of height $k$}
\label{s:pcenp}
Let $p$ be a prime number, and 
let $k$ be a positive integer satisfying $k\leq p-1$.
For a pro-$p$ group $P$ and an integer $n$ we put
\begin{equation}
\label{eq:lambdakf}
\lambda_n^{(k)}(P)=
\begin{cases}
\hfil P\hfil &\ \text{if $n\leq 1$,}\\
\cl(\lambda^{(k)}_{n-k}(P)^p.[\lambda^{(k)}_{n-1}(P),P])&\ \text{if $n>1$,}
\end{cases}
\end{equation}
where $\cl(\argu)$ denotes the closure operation.
One has the following.

\begin{prop}
\label{prop:pcenser}
Let $P$ be a pro-$p$ group, and let $k\leq p-1$. For $m,n\geq 1$ one has:
\begin{itemize}
\item[(a)]  $[\lambda_n^{(k)}(P),\lambda_m^{(k)}(P)]\leq\lambda_{m+n}^{(k)}(P)$;
\item[(b)]  $\lambda_n^{(k)}(P)^p\leq\lambda_{n+k}^{(k)}(P)$;
\item[(c)] if $k\leq p-2$, then for $x,y\in \lambda_n^{(k)}(P)$, $(xy)^p\lambda_{n+k+1}^{(k)}(P)=x^py^p\lambda_{n+k+1}^{(k)}(P)$.
\end{itemize}
In particular, $(\lambda_n^{(k)}(P))_{n\geq 1}$ is a descending $p$-central series.
If $k\leq p-2$ the associated graded object 
\begin{equation}
\label{eq:lamgr}
\gr^{(k)}_\bullet(P)=\textstyle{\coprod_{n\geq 1}} 
\lambda^{(k)}_n(P)/\lambda^{(k)}_{n+1}(P)
\end{equation}
carries naturally the structure of
a graded $\F_p[t_\bullet]$-Lie algebra with $t_\bullet$ 
being homogeneous of degree $k$.
\end{prop}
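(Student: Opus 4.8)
The plan is to model the series $(\lambda_n^{(k)}(P))_{n\geq1}$ on the classical Lazard/Jennings filtration, and to prove the three inclusions (a)--(c) simultaneously by a single induction on $m+n$, exactly as one does for the lower $p$-central series (the case $k=1$). The point of the parameter $k$ is that a $p$-th power of a degree-$n$ element now sits in degree $n+k$ rather than $n+1$, which is precisely what is needed to keep the Hall--Petrescu collection formula under control when $k\leq p-1$; the condition $k\leq p-2$ in part (c) reflects the fact that the ``leading'' correction term in $(xy)^p$ beyond $x^py^p$ is the commutator $[y,x]^{\binom p2}$-type term of weight $\geq n+k+1$ only once one can absorb the $\binom pj$ for $1<j<p$ into higher terms, which requires a little more room.

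\textbf{Setup of the induction.} First I would fix notation: write $\lambda_n=\lambda_n^{(k)}(P)$, and observe from \eqref{eq:lambdakf} that each $\lambda_n$ is a closed normal subgroup, that $\lambda_1=\lambda_0=\cdots=P$, and that $\lambda_{n+1}\leq\lambda_n$ (an easy induction: $\lambda_n^p$ and $[\lambda_{n-1},P]$ are both visibly contained in $\lambda_{n-1}$, using the inductive inclusion $\lambda_{n-k}\leq\lambda_{n-k-1}$ for the power part). Then I set up the joint induction on $s=m+n$ for the statement $\mathrm{(a)}_s\wedge\mathrm{(b)}_s\wedge\mathrm{(c)}_s$, where $\mathrm{(b)}_s$ means ``$\lambda_n^p\leq\lambda_{n+k}$ for all $n$ with $2n\leq s$'' or similar bookkeeping; the base cases are trivial because $\lambda_n=P$ for $n\leq1$ and the defining formula gives the required containments for small $n$ directly.

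\textbf{The commutator step (a).} For the inductive step of (a) I would use the Hall--Witt identity together with the three basic commutator identities $[xy,z]=[x,z]^y[y,z]$ and $[x,yz]=[x,z][x,y]^z$ to reduce $[\lambda_n,\lambda_m]$ to generators, and then push commutators inward: a commutator $[\,\lambda_a^p,\lambda_b\,]$ is handled by $[x^p,y]\equiv[x,y]^p \pmod{\text{higher}}$ via Hall--Petrescu, landing in $\lambda_{a+b}^p\cdot(\text{deeper terms})\leq\lambda_{a+b+k}\leq\lambda_{a+b+1}\leq\lambda_{m+n}$ after using $(\mathrm{b})_{<s}$; a commutator $[[\lambda_a,P],\lambda_b]$ is handled by the Jacobi/Hall--Witt identity and $(\mathrm{a})_{<s}$. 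Here one genuinely uses $k\leq p-1$ so that all the binomial coefficients $\binom pj$ for $1\leq j\leq p-1$ that appear carry a factor of $p$ and hence move things up by at least $k$ in the filtration — this is the standard restricted-Lie-algebra bookkeeping.

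\textbf{The power steps (b) and (c), and the graded structure.} For (b) I would apply the Hall--Petrescu collection formula to $x\in\lambda_n$, $\lambda_{n+1}\ni y$ type data, or more directly expand $(\lambda_n)^p$: an element of $\lambda_n$ for $n>1$ is a product of $p$-th powers from $\lambda_{n-k}$ and commutators from $[\lambda_{n-1},P]$; raising to the $p$-th power and collecting, the cross terms are commutators of weight $\geq n+(n-k)\geq n+1$ raised to binomial-coefficient powers, which by $(\mathrm{a})_{<s}$ and $(\mathrm{b})_{<s}$ land in $\lambda_{n+k}$, while the ``diagonal'' terms are $p$-th powers of elements of $\lambda_{n-k}$ and of $\lambda_{n-1}$, hence in $\lambda_n$ and $\lambda_{n-1+k}\geq\lambda_{n+k}$... (one must be slightly careful here and it is cleanest to phrase (b) as part of the simultaneous induction rather than deriving it afterwards). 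Part (c) is the Hall--Petrescu formula $(xy)^p=x^py^p\,c_2^{\binom p2}c_3^{\binom p3}\cdots c_{p-1}^{\binom p{p-1}} c_p$ with $c_j\in\gamma_j(\langle x,y\rangle)$; for $x,y\in\lambda_n$ one has $c_j\in\lambda_{jn}$, so $c_j^{\binom pj}\in\lambda_{jn}^p\cdot\lambda_{jn+?}$ and a short computation using $k\leq p-2$ shows every such term lies in $\lambda_{n+k+1}$ (the tightest term being $j=p$: $c_p\in\lambda_{pn}\subseteq\lambda_{n+k+1}$ needs $pn\geq n+k+1$, i.e. $(p-1)n\geq k+1$, true since $n\geq1$ and $k\leq p-2$). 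Finally, from (a)--(c) the graded pieces $\gr_n=\lambda_n/\lambda_{n+1}$ inherit: an $\F_p$-vector space structure (from (b), since $\lambda_n^p\leq\lambda_{n+k}\leq\lambda_{n+1}$), a Lie bracket $\gr_n\times\gr_m\to\gr_{n+m}$ (from (a), with antisymmetry and Jacobi from the group commutator identities), a degree-$k$ operator $\gr_n\to\gr_{n+k}$ induced by $x\mapsto x^p$ which is additive precisely by (c) — giving the $\F_p[t_\bullet]$-module structure with $t_\bullet$ in degree $k$ — and the compatibility (restricted-Lie-type axiom relating $t_\bullet$ and the bracket) again from Hall--Petrescu. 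I would record these as immediate consequences rather than re-proving them in detail.

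\textbf{Main obstacle.} The delicate point is \emph{not} any one identity but the bookkeeping that makes the simultaneous induction close: one must choose the induction parameter and the precise form of the statements $\mathrm{(b)}_s,\mathrm{(c)}_s$ so that every term produced by Hall--Petrescu or Hall--Witt is indexed strictly below $s$, and one must verify that the numerical inequalities ($jn+k\cdot(\text{number of }p\text{'s})\geq$ target, for $2\leq j\leq p$) all hold under the hypothesis $k\leq p-1$ for (a),(b) and $k\leq p-2$ for (c). Getting these indices to line up — in particular seeing exactly where $k=p-1$ is still fine for (a),(b) but fails for (c) — is the real content; the rest is the standard Lazard machinery.
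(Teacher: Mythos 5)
Your proposal is correct and follows essentially the same route as the paper: parts (a) and (b) rest on the same Hall--Petrescu/commutator-calculus induction you outline (the paper does not write this out, citing instead the analogous argument of Fern\'andez-Alcober, Gonz\'alez-S\'anchez and Jaikin-Zapirain), and part (c) is proved exactly as you do, via the Hall--Petrescu formula together with the containment $\gamma_2(\lambda_n^{(k)}(P))^p\,\gamma_p(\lambda_n^{(k)}(P))\leq\lambda_{n+k+1}^{(k)}(P)$, the hypothesis $k\leq p-2$ entering precisely through your inequality $(p-1)n\geq k+1$ for the $\gamma_p$-term. The only blemish is the chain $\lambda_{a+b+k}\leq\lambda_{a+b+1}\leq\lambda_{m+n}$ in your treatment of $[\lambda_a^p,\lambda_b]$, whose indices run the wrong way; but since there $a=n-k$, the containment $\lambda_{a+b}^p\leq\lambda_{a+b+k}=\lambda_{m+n}$ already gives what you need, so nothing is lost.
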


\begin{proof}
(a) and (b) follow from a similar argument as in \cite[Lemma 4.6]{gust:omega}. By 
\cite[Chap. III, Theorem 9.4]{hupp1:grp}, one has 
\begin{equation}
(xy)^p\equiv x^py^p\pmod{\gamma_2(\lambda_n^{(k)}(P))^p\gamma_p(\lambda_n^{(k)}(P))}.
\end{equation}
By (a) and (b), $\gamma_2(\lambda_n^{(k)}(P))^p\gamma_p(\lambda_n^{(k)}(P))\leq \lambda_{n+k+1}^{(k)}(P)$. This yields the claim.
\end{proof}

Note that for $k=1$ the series $(\lambda_n^{(k)}(P))_{n\geq 1}$
coincides with the descending $p$-central series of $P$.
A finite $p$-group $P$ is called of {\it $k$-elementary $p$-length $\ell$}
if $\gr^{(k)}_{\ell}(P)\not=0$, but $\gr^{(k)}_{\ell+1}(P)=0$. In particular,
a finite $p$-group $P$ is of
$k$-elementary $p$-length $\ell$, $\ell\leq k$, if, and only if, it is of exponent $p$
and nilpotency class $\ell$.

\subsection{$p$-Central $p$-groups of small height}
\label{ss:pcen1}
Let $p$ be odd,  let $k\leq p-2$, and let $P$ be a finite $p$-group.
Then $P$ is $k$-powerful if, and only if,
$t_\bullet\colon \gr^{(k)}_\bullet(P)\to\gr^{(k)}_{\bullet+k}(P)$ is surjective.
A weak form of injectivity of the mapping $t_\bullet$ has the following consequence.

\begin{prop}
\label{pro:inj}
Let $P$ be a finite $p$-group of $k$-elementary $p$-length $\ell$, $k\leq p-2$, and assume that
$t_s\colon \gr^{(k)}_s(P)\to\gr^{(k)}_{s+k}(P)$ is injective for $s<\ell-k+1$.
Then $P$ is $p$-central of height $k$.
\end{prop}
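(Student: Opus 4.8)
The plan is to translate the hypothesis ``$x$ has order $p$'' into the graded Lie algebra $\gr^{(k)}_\bullet(P)$ and then exploit the partial injectivity of $t_\bullet$ to force every element of order $p$ to sit far down the series $(\lambda^{(k)}_n(P))_{n\ge1}$ --- far enough to be contained in $\zeta_k(P)$. The first thing to record is that, in each degree, $t_\bullet$ is the $p$-th power map: for $x\in\lambda^{(k)}_s(P)$ one has $x^p\in\lambda^{(k)}_{s+k}(P)$ by Proposition~\ref{prop:pcenser}(b), and by Proposition~\ref{prop:pcenser}(c) the assignment $x\,\lambda^{(k)}_{s+1}(P)\mapsto x^p\,\lambda^{(k)}_{s+k+1}(P)$ is a well-defined homomorphism $t_s\colon\gr^{(k)}_s(P)\to\gr^{(k)}_{s+k}(P)$ (this is the only place where $k\le p-2$ is needed). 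I will also use that $\lambda^{(k)}_{\ell+1}(P)=1$, by the definition of the $k$-elementary $p$-length of $P$.

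Next I would dispose of the degenerate case $\ell\le k$: here $P$ has exponent $p$ and nilpotency class $\ell\le k$, so $\zeta_k(P)=P\ge\Omega_1(P)$ and there is nothing to prove. So assume $\ell>k$, hence $\ell-k+1\ge2$. Take $x\in P$ with $x^p=1$ and $x\ne1$, and let $m$ be the unique integer with $x\in\lambda^{(k)}_m(P)\setminus\lambda^{(k)}_{m+1}(P)$ (this exists because $\lambda^{(k)}_{\ell+1}(P)=1$). The image of $x$ in $\gr^{(k)}_m(P)$ is then nonzero, while $t_m$ sends it to $x^p\,\lambda^{(k)}_{m+k+1}(P)=0$; thus $t_m$ is not injective and the hypothesis forces $m\ge\ell-k+1$. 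Hence every element of $P$ of order dividing $p$ lies in $\lambda^{(k)}_{\ell-k+1}(P)$, and therefore $\Omega_1(P)\le\lambda^{(k)}_{\ell-k+1}(P)$.

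Finally I would close the loop by showing $\lambda^{(k)}_{\ell-k+1}(P)\le\zeta_k(P)$. By Proposition~\ref{prop:pcenser}(a) with the term $\lambda^{(k)}_1(P)=P$ one gets $[\lambda^{(k)}_n(P),P]\le\lambda^{(k)}_{n+1}(P)$ for all $n$; iterating $k$ times,
\[
[\lambda^{(k)}_{\ell-k+1}(P),_{(k)}P]\le\lambda^{(k)}_{\ell+1}(P)=1,
\]
which is precisely the assertion $\lambda^{(k)}_{\ell-k+1}(P)\le\zeta_k(P)$. Together with the previous paragraph this gives $\Omega_1(P)\le\zeta_k(P)$, i.e.\ $P$ is $p$-central of height $k$. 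I do not expect a genuine obstacle in this argument: the substance is already in Proposition~\ref{prop:pcenser}, and what remains is bookkeeping. The points that need care are the identification of $t_s$ with the $p$-th power map on graded pieces, the index match $(\ell-k+1)+k=\ell+1$ which makes the injectivity range $s<\ell-k+1$ in the hypothesis exactly the right one, and the fact that $\lambda^{(k)}_{\ell+1}(P)=1$; all three follow at once from the definitions and Proposition~\ref{prop:pcenser}.
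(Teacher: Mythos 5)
Your argument is correct and is essentially the paper's proof written out in full: the paper's one-line proof asserts exactly your two steps, namely that the injectivity hypothesis forces every element of order $p$ into $\lambda^{(k)}_{\ell-k+1}(P)$ (your non-zero class in $\gr^{(k)}_m(P)$ killed by $t_m$), and that $\lambda^{(k)}_{\ell-k+1}(P)\leq\zeta_k(P)$ (your iteration of Proposition~\ref{prop:pcenser}(a) together with $\lambda^{(k)}_{\ell+1}(P)=1$). Your identification of $t_s$ with the induced $p$-th power map and your separate treatment of the case $\ell\leq k$ are just explicit versions of what the paper leaves implicit, so no further comment is needed.
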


\begin{proof} By hypothesis, one has $\lambda^{(k)}_{\ell-k+1}(P)\leq \zeta_k(P)$,
and every element of order $p$ must be contained in $\lambda^{(k)}_{\ell-k+1}(P)$.
\end{proof}

\begin{example}
\label{ex:a} Let $k\leq p-2$.

\noindent
(a) Let $F$ be a finitely generated free pro-$p$ group. 
Then Proposition~\ref{pro:inj} implies that
for any
$m\geq 1$ the finite $p$-group $P_m=F/\lambda^{(k)}_{m+k}(F)$ is 
$p$-central of height $k$.

\noindent
(b) Let $Q$ be a finitely generated torsion-free $k$-powerful
pro-$p$ group. Then Proposition~\ref{pro:inj} shows that 
$Q/\lambda^{(k)}_{m+k}(Q)$ 
is a finite $p$-central $p$-group of height $k$. 
In particular, if $Q$ is a finitely generated uniformly powerful
pro-$p$ group, then $Q/Q^{p^{m+1}}$ is a finite $p$-central $p$-group.
This result can also be obtained from the theory of powerful groups.
(see \cite{ddms:padic}.)

\noindent
(c) Let $M$ be a finite $\Z_p$-module. It is straight forward to check that
\begin{equation}
\label{eq:expl}
\PL(M)=\{\, \iid_M+p.\alpha\mid \alpha\in\End_{\Z_p}(M)\,\}.
\end{equation}
is a $p$-central $p$-group. For $p$ odd it is also powerful.

\noindent
(d) For $n\geq 2$ let $P$ be a Sylow pro-$p$ subgroup of $\SL_n(\Z_p)$,
and for $m\geq 1$ let $K_m$ denote the kernel of the canonical homomorphism
$\SL_n(\Z_p)\to\SL_n(\Z_p/p^m.\Z_p)$, i.e., $K_m$ is just the $m^{th}$-congruence subgroup.

Suppose that $p>{n+1}$. Then $P$ is torsion-free. Moreover, one has
\begin{equation}
\label{eq:ex1}
\gamma_{n+1}(P)\leq P^p,
\end{equation}
i.e., $P$ is $n$-powerful and thus potent. This fact can be used to show that for
$P_m=P/K_{m+1}$ one has
\begin{equation}
\label{eq:exs}
\Omega_1(P_m)=K_m/K_{m+1}.
\end{equation}
In particular, $P_m$ is $p$-central of height $2n-1$.
\end{example}

\subsection{Homomorphic images of $p$-central pro-$p$ groups of height $k$}
\label{ss:hom}
The following power-commutator relations are well-known.

\begin{lemma}
\label{lemma:potom}
Let $P$ be a pro-$p$ group, and let $N$ be a closed normal subgroup of $P$.
If $N=\langle X \rangle$ then
\begin{equation}
\label{eq:potom}
[N,_{(r)} P]^p \leq [P (N),_{(r)}P][N,_{(r +p-1)} P],
\end{equation}
where $P(N)=\langle\, x^p\mid x\in X\, \rangle^G$.
In particular, for $i\geq 1$ one has
\begin{equation}
\label{eq:potom2}
[\Omega_i(P),_{(r)} P]^p \le [\Omega_{i-1}(P),_{(r)} P]\,[\Omega_{i}(P),_{(r+p-1)} P].
\end{equation}
\end{lemma}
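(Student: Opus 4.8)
The plan is to prove \eqref{eq:potom} first and then derive \eqref{eq:potom2} as the special case where $N=\Omega_i(P)$ is generated by the set $X=\{\,x\in P\mid x^{p^i}=1\,\}$, so that the subgroup $P(N)=\langle x^p\mid x\in X\rangle^P$ is contained in $\Omega_{i-1}(P)$ (indeed $(x^p)^{p^{i-1}}=x^{p^i}=1$), giving the stated inclusion.

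For \eqref{eq:potom} itself, the natural approach is induction on $r$. The base case $r=1$ asserts that $N^p=\langle X\rangle^p\le P(N)\,[N,_{(p-1)}P]$; this is the familiar statement (see e.g. the technique in \cite[Chap.~III, Hauptsatz~2.13]{hupp1:grp} or the Lazard-type expansion) that the subgroup generated by the $p$-th powers of a generating set, together with sufficiently deep commutators, recaptures $N^p$. I would establish it by working modulo $[N,_{(p-1)}P]$: in the quotient $N/[N,_{(p-1)}P]$ the relevant nilpotency class is at most $p-1$, so Hall--Petrescu collecting (or the fact that a group of class $<p$ is regular-like with respect to $p$-th powers) shows that $(x_1\cdots x_m)^p \equiv x_1^p\cdots x_m^p$ modulo deeper commutators and their $p$-th powers, and an induction on the class lets one absorb those correction terms into $P(N)\,[N,_{(p-1)}P]$. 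For the inductive step from $r$ to $r+1$, I would use the commutator identity $[a^p,g]\equiv [a,g]^p \cdot (\text{deeper commutators})$, or more precisely write $[[N,_{(r)}P],P]$, raise to the $p$-th power, and apply the $r$-case together with the Hall--Witt/Jacobi relations to push everything into the claimed product; the weight count keeps the extra $P$ in the right place so that $P(N)$ acquires one more $P$ and the deep term's index goes from $r+p-1$ to $r+p$.

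The main obstacle is the base case $r=1$: controlling $p$-th powers of products in a nilpotent group of class up to $p-1$ and bookkeeping exactly which commutators and which $p$-th powers of commutators can be absorbed into $P(N)\,[N,_{(p-1)}P]$. This is where the hypothesis that we are in the ``regular range'' (class $<p$, equivalently $k\le p-1$ in the applications) is used, and it requires a careful Hall--Petrescu collection argument; once that is in place the induction on $r$ is essentially formal commutator calculus.

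Alternatively — and this may be cleaner for the write-up — one can cite that \eqref{eq:potom} is a known power-commutator relation (it appears in the literature on $\Omega$ and agemo subgroups, compare \cite{gust:omega}, \cite{jon:agemo}), sketch only the Hall--Petrescu input and the induction on $r$, and then spend the bulk of the proof on the concrete consequence \eqref{eq:potom2}, which is the form actually invoked in Sections~\ref{ss:hom} and \ref{ss:agemo}.
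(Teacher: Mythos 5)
The paper's own ``proof'' of this lemma is a one--line citation: \eqref{eq:potom} is Theorem~2.7 of \cite{gust:omega}, and \eqref{eq:potom2} is the special case $N=\Omega_i(P)$, obtained exactly as you do it (the $p$-th powers of the generators of $\Omega_i(P)$ lie in $\Omega_{i-1}(P)$, which is normal, so $P(N)\leq\Omega_{i-1}(P)$). So the ``alternative'' you mention at the end is precisely the paper's route, and that part of your proposal, together with your derivation of \eqref{eq:potom2}, is correct and complete. Your primary route -- a self-contained proof of \eqref{eq:potom} by Hall--Petrescu collection and induction on $r$ -- is a fair outline of how the cited theorem is actually proved, but it stays at sketch level at the two places where the real work sits: for the base case ($r=0$ in the paper's indexing, not $r=1$), the clean argument is to pass to the quotient of $P$ by the (closed) normal subgroup $P(N)\,[N,_{(p-1)}P]$, in which the image of $N$ has class less than $p$, hence is regular, and is generated by elements of order dividing $p$, hence has exponent $p$; your phrase ``an induction on the class lets one absorb those correction terms'' glosses over the fact that the Hall--Petrescu correction terms are $p$-th powers of elements of $\gamma_2(N)$, not of generators, which is exactly why the quotient/regularity argument (or a genuine downward induction) is needed and why a naive absorption is circular with the $r=1$ case. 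Similarly, the step from $r$ to $r+1$ requires comparing $[x,g_1,\dots,g_r]^p$ with $[x^p,g_1,\dots,g_r]$ modulo deeper commutators, a nontrivial piece of commutator calculus that your sketch only gestures at. One conceptual correction: the lemma carries no ``regular range'' hypothesis -- no bound $k\leq p-1$ enters here at all; the deep term $[N,_{(r+p-1)}P]$ on the right is what makes the statement unconditional, and the class-$<p$ situation is created by the quotient, not assumed. (The $G$ in the definition of $P(N)$ in the statement is a typo for $P$, as you implicitly assumed.)
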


\begin{proof}
See Theorem 2.7 of \cite{gust:omega}.
\end{proof}

Theorem B can be deduced from Lemma \ref{lemma:potom} as follows.

\begin{proof}[Proof of Theorem B]
By induction, it suffices to prove the theorem
for $j=1$. Let $T$ be the torsion subgroup. By hypothesis, $T$ is closed and
nilpotent. Moreover, there exists $m>0$ such that $[T,_{(m)}P]=1$.

As the nilpotence class of $\Omega_1(P)$ is bounded by $p-1$,
$\Omega_1(P)=\{\,x\in P\mid x^p=1\,\}$ (see \cite{hall:pgr}). 
In order to show that $P/\Omega_1(P)$
is $p^i$-central of height $k$ it suffices to show that for all $r\geq k$,
$[\Omega_{i+1}(P),_{(r)}P]\leq \Omega_1(P)$. 
As the claim certainly holds for $r\geq m$, we proceed by reverse
induction on $r< m$. 
Suppose that the assumption is true for $r+1\leq m$. 

\noindent
(1) If $k\leq p-2$, then by Lemma \ref{lemma:potom} and induction
\begin{equation}
\label{eq:prthmB}
[\Omega_{i+1}(P),_{(r)}P]^p 
\leq [\Omega_i(P),_{(r)}P][\Omega_{i+1}(P),_{(r+p-1)}P]\leq
[\Omega_1(P),_{(p-2)}P]=1.
\end{equation}
Hence $[\Omega_{i+1}(P),_{(r)}P]\leq\Omega_1(P)$.

\noindent 
(2) If $k=p-1$ and $i\geq 2$ the same argument as in \eqref{eq:prthmB} yields
\begin{equation}
\label{eq:prthmB2}
[\Omega_{i+1}(P),_{(r)}P]^p  
\leq\Omega_1(P)\cap \Zen(P).
\end{equation} 
Hence $[\Omega_{i+1}(P),_{(r)}P]\leq \Omega_2(P)$, and therefore
\begin{equation}
\label{eq:prthmB3}
[\Omega_{i+1}(P),_{(r)}P]^p\leq
 [\Omega_i(P),_{(r)}P][\Omega_{i+1}(P),_{(r+p-1)}P]\leq [\Omega_2(P),_{(p-1)}P]=1.
 \end{equation}
Thus $[\Omega_{i+1}(P),_{(r)}P]\leq \Omega_1(P)$, and this yields the claim.
\end{proof}

The following example shows that the hypothesis in Theorem B 
on the finiteness of the subgroup generated by torsion elements
cannot be removed.

\begin{example}
Let $p$ be odd, let $C_{p^{i+1}}$ be the cyclic group of order $p^{i+1}$, $i\geq 1$,
and let $A$ denote its canonical homomorphic image of order $p$.
Let $M=\Z_p[A]/\Z_p.\Delta$, where $\Delta=\sum_{a\in A} a$.
The semi-direct product $P=C_{p^{i+1}}\ltimes M$, with the canonical action of
$C_{p^{i+1}}$ on $M$, is a $p$-adic analytic pro-$p$ group which is $p^i$-central.
However, $P/\Omega_i(P)$ is not $p$-central.
If $c\in C_{p^{i+1}}$ is a generator and $a\in M$, then $c.a$ is an element of order
$p^{i+1}$. Hence the closed subgroup of $P$ generated by all torsion elements is
the entire group $P$.
\end{example}

Theorem B can be applied also to 
certain families of $p$-central pro-$p$ groups.

\begin{cor}
\label{cor:thmB}
Let $P$ be a $p^i$-central $p$-adic analytic pro-$p$ group of height $k$
such that $\gamma_{h(p-1)}(P)\leq \gamma_i(P)^{p^j}$ with
$h(p-1)<j(p-1)+i$. Then $P/\Omega_j(P)$ is also a $p^i$-central
pro-$p$ group of height $k$ and $\Omega_j(P)=\{\, x\in P\mid
x^{p^j}=1\,\}$.
\end{cor}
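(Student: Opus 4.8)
The plan is to deduce Corollary~\ref{cor:thmB} from Theorem~B by verifying that the stated condition on the lower central series forces the torsion elements of $P$ to form a finite $p$-group, so that Theorem~B applies directly. First I would recall that a $p$-adic analytic pro-$p$ group contains an open uniformly powerful (hence torsion-free) normal subgroup $U$; consequently the torsion elements of $P$ all lie in finitely many cosets of $U$, and the torsion subgroup $T$ is automatically closed. What still needs to be ruled out is that $T$ is infinite, i.e.\ that $T$ could be a pro-$p$ group of positive dimension; the hypothesis $\gamma_{h(p-1)}(P)\leq\gamma_i(P)^{p^j}$ with $h(p-1)<j(p-1)+i$ is precisely what is designed to exclude this.

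The key step is to show that this hypothesis makes $P$ $k$-powerful (equivalently potent) in a way that controls torsion. Using Proposition~\ref{prop:pcenser} — specifically the $\F_p[t_\bullet]$-Lie algebra structure on $\gr^{(k)}_\bullet(P)$ and parts (a),(b) — together with the standard Lazard-type commutator collection (the same tool invoked via \cite[Chap. III, Theorem 9.4]{hupp1:grp} and Lemma~\ref{lemma:potom}), I would argue that the inclusion $\gamma_{h(p-1)}(P)\leq\gamma_i(P)^{p^j}$ propagates: iterating the power-commutator relation of Lemma~\ref{lemma:potom} gives $[\Omega_{i+1}(P),_{(r)}P]^{p}\leq[\Omega_i(P),_{(r)}P]\,[\Omega_{i+1}(P),_{(r+p-1)}P]$, and the numerical condition $h(p-1)<j(p-1)+i$ guarantees that the relevant commutator terms eventually land inside $\gamma_i(P)^{p^j}$, which is torsion-free (being a subgroup of the torsion-free potent part). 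Hence $[\Omega_{i+1}(P),_{(m)}P]=1$ for some $m$, which is exactly the finiteness hypothesis required to run the proof of Theorem~B. Then Theorem~B yields that $P/\Omega_j(P)$ is $p^i$-central of height $k$ and that $\Omega_j(P)=\{x\in P\mid x^{p^j}=1\}$, with the identity \eqref{eq:omega} transcribed to the index $j$.

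The main obstacle I anticipate is bookkeeping with the two interlocking arithmetic inequalities: one must show that $h(p-1)<j(p-1)+i$ is the sharp condition ensuring that, after $j$ successive applications of the $p$-power/commutator inequality starting from $\gamma_{h(p-1)}(P)$, one reaches a subgroup of $\gamma_i(P)^{p^j}$ rather than merely $\gamma_i(P)^{p^{j-1}}$ or worse. Getting the indices to align requires carefully tracking how each application of Lemma~\ref{lemma:potom} shifts the commutator weight by $p-1$ while raising the exponent of $p$ by one, and comparing this against the weight $h(p-1)$ and the target exponent $p^j$. Once this numerical lemma is in place the rest is formal: closedness of $T$ is automatic for $p$-adic analytic groups, and finiteness follows since a closed torsion subgroup of a $p$-adic analytic group annihilated by some commutator length must be finite. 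I would isolate the arithmetic claim as a short internal step and then simply cite the proof of Theorem~B verbatim for the conclusion.
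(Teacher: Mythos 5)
Your overall strategy (reduce to Theorem~B by establishing that the torsion elements of $P$ form a finite $p$-group) is exactly the right reduction, and it is the one the paper makes; but the paper gets the finiteness in one line by citing \cite[Thm.~4.8]{gust:omega}, which states precisely that a $p$-adic analytic pro-$p$ group satisfying $\gamma_{h(p-1)}(P)\leq\gamma_i(P)^{p^j}$ with $h(p-1)<j(p-1)+i$ has finite torsion. Your proposal instead tries to re-derive this fact, and the sketch you give has a genuine gap at its central point: you assert that $\gamma_i(P)^{p^j}$ is torsion-free ``being a subgroup of the torsion-free potent part.'' Nothing in the hypotheses provides such a torsion-free part containing $\gamma_i(P)^{p^j}$; the claim is false in general (for instance for any finite $P$, or whenever $P$ has central torsion, $\gamma_i(P)^{p^j}$ may be a nontrivial torsion group). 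Since the whole argument ``the commutator terms eventually land inside $\gamma_i(P)^{p^j}$, hence vanish'' rests on this assertion, the proposed proof of the finiteness (equivalently of the Engel-type condition $[\Omega_{i+1}(P),_{(m)}P]=1$) does not go through; the arithmetic ``bookkeeping'' you defer is not the only missing piece.

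Two further points. First, your opening claim that the torsion elements automatically form a closed subgroup because they ``lie in finitely many cosets'' of an open uniform subgroup is vacuous (every element of $P$ lies in finitely many cosets of an open subgroup); what is true for free is only that torsion elements have bounded order, hence form a closed \emph{subset}. Whether they form a subgroup, and a finite one, is exactly the substantive issue: the example given in the paper right after the proof of Theorem~B is a $p$-adic analytic, $p^i$-central pro-$p$ group in which the torsion elements generate the whole (infinite) group, so the numerical hypothesis must enter in an essential way. Second, even granting $[\Omega_{i+1}(P),_{(m)}P]=1$, you would still have to check the remaining inputs to the proof of Theorem~B (that the torsion elements form a closed nilpotent subgroup, and the induction on $j$), rather than cite that proof ``verbatim.'' The honest fix is either to quote \cite[Thm.~4.8]{gust:omega} as the paper does, or to supply a complete proof of the finiteness statement, which is a result in its own right and not a consequence of Lemma~\ref{lemma:potom} alone.
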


\begin{proof}
By  \cite[Thm.~4.8]{gust:omega}, the torsion elements of
$P$ form a finite $p$-group. Thus Theorem B yields the claim.
\end{proof}

\begin{rem}
\label{rem:o1ep}
Let $p$ be odd, and let $P$ be a $p$-central $p$-group of height $k$,
$k\leq p-2$. We say that $P$ has the {\it $\Omega_1$-extension property}
if there exists a finite $p$-central $p$-group $Q$ of height $k$ such that
$P\simeq Q/\Omega_1(Q)$.
It can be shown that all $p$-groups\footnote{In Example~\ref{ex:a}(d) one has to require that $n\leq (p-1)/2$.} 
described in Example \ref{ex:a} have this property.
However, there are examples of $p$-central groups which do not have this property.
For $p$-central $p$-groups of height $1$ this property has a strong impact
on the structure of the $\F_p$-cohomology algebra $H^\bullet(P,\F_p)$
(see \cite{th:pcenp}). It is very likely that this is also the case for
$p$-central $p$-groups of height larger than $1$ but less than $p-2$.
\end{rem}

\subsection{Bounding the index of the agemo subgroup}
\label{ss:agemo}
For a finite $p$-group $P$ we define the
following series of characteristic subgroups:
\begin{equation}
\label{eq:Mser}
M_n(P) =
\begin{cases}
[\Omega_1(P),_{(p-n-1)}P] & \text{if $n\leq p-1$,}
\\[8pt]
M_{n-p+1}(P)^{1/p} & \text{otherwise.}
\end{cases}
\end{equation}
It is easy to check that $M_{n+1}(P)\geq M_n(P)$. For $n<0$ we put $M_n(P)=1$.
If $P$ is a $p$-central $p$-group of 
height $p-2$ (resp. a $p^2$-central group of height $p-1$) this series has the following
properties.

\begin{prop}
\label{prop-1} 
Let $p$ be odd, let $P$ be a finite $p$-central $p$-group of height $p-2$
(resp. a $p^2$-central group of height $p-1$), and let $n\geq 0$. Then one has the following:
\begin{itemize}
\item[(a)] $P/M_n(P)$ is a $p$-central group of height $p-2$ (resp. a $p^2$-central group of height $p-1$).
\item[(b)] $[M_n(P),P]\leq M_{n-1}(P)$.
\item[(c)] $M_n(P)^p\leq M_{n-p+1}(P)$.
\item[(d)] If $x\in M_n(P)$, $y\in M_m(P)$ and $n\geq m$, then 
\begin{equation}
\label{eq:madd}
(xy)^p\equiv x^py^p\pmod{M_{m-p}(P)}.
\end{equation}
\end{itemize}
\end{prop}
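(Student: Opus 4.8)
\emph{Proof proposal.} The four assertions will be proved simultaneously, by induction on $n$; at each step I would establish, in the order (b), (c), (d), (a), whatever is not yet known, using the parts already available at all smaller indices and --- from (c) onward --- at the index $n$ itself. The cases $n\le 0$ are vacuous, $M_n(P)$ being trivial there.

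\emph{The range $1\le n\le p-1$.} Here $M_n(P)=[\Omega_1(P),_{(p-n-1)}P]$ is an explicit iterated commutator, so (b) is just the identity $[M_n(P),P]=[\Omega_1(P),_{(p-n)}P]=M_{n-1}(P)$, and $M_n(P)\le\Omega_1(P)$. Since $\Omega_1(P)$ has nilpotency class at most $p-1$, it is regular of exponent $p$; as the indices occurring here also make $M_{n-p+1}(P)=1$ and $M_{m-p}(P)=1$, parts (c) and (d) hold trivially. For (a) I would rerun the argument of Theorem~B inside $P/M_n(P)$: every element of order $p$ of that quotient lies in the image of $\Omega_2(P)$ --- because $M_n(P)\le\Omega_1(P)$ has exponent $p$ --- and Lemma~\ref{lemma:potom} together with the iterate of (b) forces its central height down to $p-2$ (resp. $p-1$).

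\emph{The step $n\ge p$.} Now $M_n(P)=M_{n-p+1}(P)^{1/p}$. By part (a) at the index $n-p+1<n$, the quotient $\bar P=P/M_{n-p+1}(P)$ is $p$-central of height $p-2$ (resp. $p^2$-central of height $p-1$), so $\Omega_1(\bar P)$ is a genuine subgroup, namely the set of its elements of order dividing $p$; pulling back gives the key description
\[
M_n(P)=\{\,x\in P\mid x^p\in M_{n-p+1}(P)\,\},
\]
a normal subgroup. Part (c) is then immediate, and $[M_n(P),_{(p-2)}P]\le M_{n-p+1}(P)$ since $\Omega_1(\bar P)\le\zeta_{p-2}(\bar P)$. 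For (b) I would apply Lemma~\ref{lemma:potom} to $N=M_n(P)$ with generating set $X=\{\,x\mid x^p\in M_{n-p+1}(P)\,\}$, so that $P(N)\le M_{n-p+1}(P)$, and with $r=1$:
\[
[M_n(P),P]^p\le[P(N),P]\cdot[M_n(P),_{(p)}P]\le M_{n-p}(P),
\]
where $[P(N),P]\le[M_{n-p+1}(P),P]\le M_{n-p}(P)$ by (b) at $n-p+1$, and $[M_n(P),_{(p)}P]\le M_{n-p}(P)$ follows from the class bound just noted and two further applications of (b); since $M_{n-1}(P)=M_{n-p}(P)^{1/p}$ this yields $[M_n(P),P]\le M_{n-1}(P)$. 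For (d) I would expand $(xy)^p$ by the Hall--Petrescu collection formula (\cite[Chap. III, Theorem 9.4]{hupp1:grp}) as $x^py^p$ modulo $\gamma_2(\langle x,y\rangle)^p\,\gamma_p(\langle x,y\rangle)$; for $x\in M_n(P)$, $y\in M_m(P)$, $n\ge m$ one has $\gamma_2(\langle x,y\rangle)\le[M_n(P),M_m(P)]\le M_{m-1}(P)$, hence $\gamma_2(\langle x,y\rangle)^p\le M_{m-p}(P)$ by (c), and, at least when $n=m$, $\langle x,y\rangle$ maps into $\Omega_1(P/M_{n-p+1}(P))$ and so has class $\le p-2$ there, whence $\gamma_{p-1}(\langle x,y\rangle)\le M_{n-p+1}(P)$ and $\gamma_p(\langle x,y\rangle)\le[M_{n-p+1}(P),P]\le M_{n-p}(P)$ by (b). Finally (a) at index $n$ would be obtained once more by transporting the proof of Theorem~B to $P/M_n(P)$, parts (b) and (c) for $M_n(P)$ taking over the role of Lemma~\ref{lemma:potom}.

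The hard part will be the weight-$p$ Hall--Petrescu term in (d) when $n>m$: the modulus $M_{m-p}(P)$ is dictated by the index of the \emph{smaller} of the two elements, and driving the weight-$p$ commutators down to it --- rather than only to $M_{m-p+1}(P)$ or $M_{n-p}(P)$, which the crude estimates give --- is where the genuine combinatorics sits, and it is also exactly what makes the power structure tight enough for Theorem~C. The same kind of squeezing underlies (b). Everything here relies on the hypotheses $k\le p-2$, resp. $k=p-1$ with $i\ge 2$, which ensure that every section produced in the induction has nilpotency class strictly below $p$, hence is regular with a well-behaved $p$-th power map; in the $p^2$-central height-$(p-1)$ case they force each estimate to be run in two stages --- first landing in the target modulo $\Omega_2(P)$, then improving --- which is precisely the role of $i\ge 2$.
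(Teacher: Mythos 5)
Your overall route is the paper's: Lemma~\ref{lemma:potom}, the Hall--Petrescu formula, and the identification of $M_n(P)$, $n\ge p$, as the full preimage of $\Omega_1\bigl(P/M_{n-p+1}(P)\bigr)$; your arguments for (b), (c) and for (d) in the case $n=m$ are essentially those of the paper. But as written the proposal does not prove (d) when $n>m$ (and $m\ge p$): you explicitly leave this open as ``the hard part''. That is a genuine gap, and in fact no new combinatorics is needed to close it. Since $y\in M_m(P)$ is central modulo the normal subgroup $[M_m(P),P]$, one has $\gamma_2(\langle x,y\rangle)\le[M_m(P),P]$ and hence the whole Hall--Petrescu error lies in $[M_m(P),P]^p\,[M_m(P),_{(p-1)}P]$ --- the modulus depends only on the \emph{smaller} index $m$, irrespective of where $x$ sits. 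Then $[M_m(P),P]^p\le M_{m-1}(P)^p\le M_{m-p}(P)$ by (b) and (c), and $[M_m(P),_{(p-1)}P]\le[M_{m-p+1}(P),P]\le M_{m-p}(P)$ by exactly the height observation you already used for $n=m$ (namely $M_m(P)/M_{m-p+1}(P)=\Omega_1\bigl(P/M_{m-p+1}(P)\bigr)\le\zeta_{p-2}$ of that quotient, via (a) at index $m-p+1$), followed by one application of (b). This is the paper's argument in \eqref{eq:Md}--\eqref{eq:Md2}; the point you missed is that the ``squeeze'' comes from the height property of $M_m(P)$ itself, not from a nilpotency-class bound on $\langle x,y\rangle$, so it works verbatim for $n>m$.

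Part (a) is also thinner than it should be. For $n\ge p$ you need not ``transport'' the proof of Theorem~B: by (a) at index $n-p+1$ the quotient $\bar P=P/M_{n-p+1}(P)$ is again $p$-central of height $p-2$ (resp.\ $p^2$-central of height $p-1$), and $P/M_n(P)=\bar P/\Omega_1(\bar P)$, so Theorem~B applies directly; this is precisely the paper's reduction to $n\le p-1$. For $1\le n\le p-1$ your appeal to ``Lemma~\ref{lemma:potom} together with the iterate of (b)'' does not fit your induction on $n$: what must be shown is $[M_n(P)^{1/p},_{(p-2)}P]\le M_n(P)$, i.e.\ control of $M_{n+p-1}(P)$, an index your induction has not yet reached. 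The paper instead proves, by \emph{reverse} induction on the number $r$ of commutations (using Lemma~\ref{lemma:potom} and $[\Omega_1(P),_{(p-2)}P]=1$), that $[[\Omega_1(P),_{(s)}P]^{1/p},_{(r)}P]\le\Omega_1(P)$ for all $r\ge p-2-s$, and then commutes $s$ further times to land in $M_n(P)$; this is the same engine as in Theorem~B, but applied to $[\Omega_1(P),_{(s)}P]^{1/p}$ rather than to $\Omega_{i+1}(P)$ and carried out inside $P$, not inside $P/M_n(P)$. With that reverse induction spelled out, and with the correction to (d) above, your scheme does reproduce the paper's proof.
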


\begin{proof}
(a) We restrict our considerations to the case when $P$ is $p$-central of height $p-2$.
The case when $P$ is $p^2$-central of height $p-1$ follows by a similar argument
(see proof of Theorem B).

By Theorem B, it suffices to prove the statement for
$n\leq p-1$. Hence one has to show that for $1\leq s\leq p-1$
and $r\geq p-2-s$, 
\begin{equation}
\label{eq:Ma}
[[\Omega_1(P),_{(s)}P]^{1/p},_{(r)}P]\leq
\Omega_1(P). 
\end{equation}
We proceed by reverse induction on $r$. For $r$ large, \eqref{eq:Ma} is certainly true. Suppose that \eqref{eq:Ma} is true for $r+1$. Then, by
Lemma~\ref{lemma:potom},
\begin{equation}
\begin{aligned}
{[[\Omega_1(P),_{(s)}P]}^{1/p},_{(r)}P]^p & \leq 
[\Omega_1(P),_{(s+r)}P][[\Omega_1(P),_{(s)}P]^{1/p},_{(r+p-1)}P] \\
& \leq  [\Omega_1(P),_{(p-2)}P]=1.
\end{aligned}
\end{equation}
Hence $[[\Omega_1(P),_{(s)}P]^{1/p},_{(r)}P]\leq \Omega_1(P)$, and this yields the claim.

\noindent (b) We may assume that $n\geq p-1$, and proceed by induction on $n$. 
By (a) and induction, one has
\begin{equation}
\label{eq:Mb}
\begin{aligned}
{[M_n(P),P]}^p & =  [M_{n-p+1}(P)^{1/p},P]^p\leq
[M_{n-p+1}(P),P][M_{n-p+1}(P)^{1/p},_{(p)}P] \\
& \leq M_{n-p}(P)\,[M_{n-p+1},P] =  M_{n-p}(P).
\end{aligned}
\end{equation}
Hence $[M_n(P),P]\leq M_{n-1}(P)$.

\noindent (c) is obvious.

\noindent (d) By the Hall-Petrescu formula (see \cite[Theorem 9.4]{hupp1:grp}) one has
\begin{equation}
\label{eq:Md}
(xy)^p\equiv
x^py^p\pmod{[M_m(P),P]^p[M_m(P),_{(p-1)}P]}.
\end{equation}
Thus if $m\leq p-1$ there is nothing to prove. Let $m>p-1$.
Then $M_m(P)=M_{m-p+1}(P)^{1/p}$, and by (a),
\begin{equation}
\label{eq:Md2}
[M_m(P),_{(p-1)}P]=[M_{m-p+1}(P)^{1/p},_{(p-1)}P]=[M_{m-p+1}(P),P]=M_{m-p}(P).
\end{equation}
Hence, (b) and (c) yield the claim.
\end{proof}

\begin{proof}[Proof of Theorem C]
Let $m_\bullet(P)= \coprod_{n\in \N} M_{n+1}(P)/M_n(P)$.
By Proposition~\ref{prop-1}(d), 
$t_\bullet\colon m_\bullet(P)\to m_\bullet(P)$,
$t_n(xM_{n-1}(P))=x^pM_{n-p}(P)$, is a well-defined, $\F_p$-linear mapping
of graded $\F_p$ vector spaces of degree $1-p$. Thus $m_\bullet(P)$ is a graded $\F_p[t_\bullet]$ module. Moreover, by construction of the series
$(M_n(P))_{n\geq 0}$, one has
\begin{equation}
\label{eq:pfthmC-1}
\Ann(t_\bullet)=m_{(p-1)}(P)=\textstyle{\coprod_{0\leq n\leq p-2}} M_{n+1}(P)/M_n(P).
\end{equation}
For any subgroup $Q$ of $P$ we put
$m_\bullet(Q)=\coprod _{n\in \N}(Q\cap M_{n+1}(P))M_n(P)/M_n(P)$. 
Hence by \eqref{eq:pfthmC-1}, $\Ann(t_\bullet)=m_\bullet(\Omega_1(P))$.
On the other hand one has
\begin{equation}
\label{eq:pfthmC-2}
t_\bullet.M(P)=\textstyle{\coprod_{n\in \N} M_{n+p}(P)^pM_n(P)/M_n(P)}
\end{equation}
and
\begin{equation}
\label{eq:pfthmB-3}
m_\bullet(P^p)=\textstyle{\coprod_{n\in \N} (P^p\cap M_{n+1}(P))M_n(P)/M_n(P)}.
\end{equation}
As $M_{n+p}(P)^p\leq P^p\cap M_{n+1}(P)$, one has
\begin{equation}
\label{eq:pfthmB-4}
|P:P^p|=|m_\bullet(P):m_\bullet(P^p)|\leq |m_\bullet(P):t_\bullet.m_\bullet(P)|=
|\Ann(t_\bullet)|=
|\Omega_1(P)|.
\end{equation} 
Moreover, if
$P^p=\{\,x^p\mid x\in P\,\}$, then $P^p\cap M_{n+1}(P)=M_{n+p}(P)^p$. Hence
equality holds in \eqref{eq:pfthmB-4}.
\end{proof}

\section{Fusion of elementary abelian $p$-subgroups and $F$-isomorphisms of 
mod-$p$ cohomology rings}
\label{s:fus}
Throughout this section we fix a prime number $p$.

\subsection{The mod $p$ cohomology ring of a finite group}
\label{ss:modpcoh}
Recall that an $\N_0$-graded $\F_p$-algebra $A_\bullet$ is called
{\it anti-commutative}, if  for $a\in A_n$, $b\in A_m$, $n,m\geq 0$ one has
\begin{equation}
\label{eq:bas-1}
b\cdot a =(-1)^{n\cdot m} a\cdot b.
\end{equation}
Such an $\F_p$-algebra is called {\it connected}, if $A_0=\F_p.1_A$.
For a finite group $G$ the {\it cohomology algebra}
\begin{equation}
\label{eq:bas-11}
H^\bullet(G)=H^\bullet(G,\F_p)
\end{equation}
is a finitely generated, connected, anti-commutative, $\N_0$-graded $\F_p$-algebra.

Let $\alpha_\bullet\colon A_\bullet\to B_\bullet$ be a homomorphism of
finitely generated, connected, anti-commutative, $\N_0$-graded $\F_p$-algebras.
Then $\alpha_\bullet$ is called an {\it $F$-isomorphism}, if
$\kernel(\alpha_\bullet)$ is nilpotent, and for all $b\in B_n$, there exists
$k\in\N_0$ such that $b^{p^k}\in\image(\alpha_\bullet)$.
If $S$ is a subgroup of the finite group $G$, then
\begin{equation}
\label{eq:bas-3}
\rst^G_S\colon H^\bullet(G)\longrightarrow H^\bullet(S)
\end{equation}
is a homomorphism of
finitely generated, connected, anti-commutative, $\N_0$-graded $\F_p$-algebras.

\subsection{Quillen stratification}
\label{ss:strat}
Let $G$ be a finite group. Let $\ca{C}_G$ denote the category
the objects of which are elementary $p$-abelian subgroups of $G$ and morphisms are given by conjugation, i.e., for $E, E^\prime\in ob(\ca{C}_G)$ one has
\begin{equation}
\label{eq:bas-4}
\mor_G(E,E^\prime)=\{\,i_g\colon E\to E^\prime\mid g\in G,\ g\,E\,g^{-1}\leq E^\prime\,\},
\end{equation}
where $i_g(e)= g\, e\, g^{-1}$, $e\in E$. Then
\begin{equation}
\label{eq:bas-5}
H^\bullet(\ca{C}_G)=\textstyle{\varprojlim_{\ca{C}_G} H^\bullet(E)}
\end{equation}
is a finitely generated, connected, anti-commutative, $\N_0$-graded $\F_p$-algebra.
Moreover, the restriction maps $\rst^G_E$ yield a map
\begin{equation}
\label{eq:bas-6}
q_G=\textstyle{\prod_{E\in ob(\ca{C}_G)}\rst^G_E}\colon H^\bullet(G)\longrightarrow
H^\bullet(\ca{C}_G).
\end{equation}
Although it is quite difficult to determine the explicit structure of the mod $p$ cohomology algebra $H^\bullet(G)$ for a given finite group $G$, one knows its structure up to
$F$-isomorphism thanks to the following remarkable result of D. Quillen which is also known as
Quillen stratification (see \cite[Cor. 5.6.4]{ben:coh2}, \cite{qu:strat}).

\begin{thm}[D. Quillen]
\label{thm:strat}
Let $G$ be a finite group. Then $q_G\colon H^\bullet(G)\to H^\bullet(\ca{C}_G)$
is an $F$-isomorphism.
\end{thm}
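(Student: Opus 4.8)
The plan is to translate the statement into geometry and then establish separately the two conditions packaged into the notion of an $F$-isomorphism. Applying $\mathrm{Spec}$ to the rings in question --- replacing $H^\bullet$ by its even-degree part when $p$ is odd, so as to work with genuinely commutative rings --- the claim that $q_G$ is an $F$-isomorphism is equivalent to the assertion that the induced morphism of prime spectra $q_G^\ast\colon\mathrm{Spec}\,H^\bullet(\ca{C}_G)\to\mathrm{Spec}\,H^\bullet(G)$ is a homeomorphism of the underlying topological spaces. Concretely one must prove (i) $\kernel(q_G)$ is nilpotent, and (ii) for every $b\in H^\bullet(\ca{C}_G)$ some $p$-power $b^{p^k}$ lies in the image of $q_G$. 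Since $\rst^G_S$ is split injective for $S\in\Syl_p(G)$ by the ordinary transfer, and every elementary abelian $p$-subgroup of $G$ is $G$-conjugate into $S$, both (i) and (ii) reduce to the case in which $G$ is a $p$-group.

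First I would dispose of (i) by the Quillen--Venkov detection theorem, which is proved by induction on $|G|$. Let $\zeta\in H^n(G)$, $n>0$, restrict to zero on every elementary abelian subgroup; if $G$ is itself elementary abelian there is nothing to show. Otherwise pick a maximal subgroup $K=\kernel(\chi)$ with $\chi\in H^1(G,\F_p)$. Every elementary abelian subgroup of $K$ is one of $G$, so $\rst^G_K(\zeta)$ dies on all of them and is nilpotent by the inductive hypothesis; say $\rst^G_K(\zeta^{p^a})=0$. The long exact cohomology sequence attached to the permutation module $\F_p[G/K]$ then places $\zeta^{p^a}$ in the ideal of $H^\bullet(G)$ generated by $\chi$ and the Bockstein $\beta(\chi)$. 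Carrying this out for every maximal subgroup of $G$ and combining the resulting divisibilities with Serre's theorem on the vanishing of suitable products of Bockstein classes in the cohomology of a non-elementary-abelian $p$-group forces $\zeta^{p^b}=0$ for some $b$, which is (i).

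Assertion (ii) is the substance of the theorem and the step I expect to be the main obstacle. Here I would follow Quillen's equivariant method. Fix a faithful complex representation $G\hookrightarrow U=U(m)$, let $T\leq U$ be a maximal torus, and consider the compact $G$-manifold $X=U/T$; it has the key property that $X^A\neq\varnothing$ for every abelian subgroup $A\leq G$. One studies $H^\bullet_G(X)=H^\bullet(EG\times_G X)$ together with the restriction maps $H^\bullet_G(X)\to H^\bullet_A(X^A)$. The localization theorem in equivariant cohomology shows that every prime of $H^\bullet_G(X)$ is pulled back from some abelian $A$, while an induction on $|G|$ together with the Evens norm maps $\mathrm{norm}^G_A\colon H^\bullet(A)\to H^\bullet(G)$ --- the multiplicative transfer, which manufactures classes on $G$ out of classes on subgroups --- is what allows one to lift the local data, after passing to $p$-th powers, into $H^\bullet(G)$. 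Specializing $X$ to a point then yields both the surjectivity of $q_G^\ast$ on points and the fact that compatible families lift modulo $p$-powers. The finiteness needed to make the localization/norm induction terminate is supplied by the Evens--Venkov finite generation theorem, and the remaining bookkeeping is Quillen's identification of each irreducible component of $\mathrm{Spec}\,H^\bullet(G)$ with the image of $\mathrm{Spec}\,H^\bullet(E)$ for a maximal elementary abelian subgroup $E$. Conditions (i) and (ii) are exactly the nilpotence of $\kernel(q_G)$ and the divisibility of the cokernel by $p$-powers, so together they say that $q_G$ is an $F$-isomorphism; the genuinely hard ingredient is (ii), and inside it the combination of the Evens norm with the equivariant localization theorem.
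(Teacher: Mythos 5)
You should note first that the paper does not prove this statement at all: Theorem~\ref{thm:strat} is Quillen's stratification theorem, quoted with references to Quillen's original papers and to Benson's book, so there is no internal argument to compare yours with. Your outline reproduces the strategy of the published proofs: the Quillen--Venkov detection lemma combined with Serre's theorem on products of Bocksteins for the nilpotence of $\kernel(q_G)$, and Quillen's equivariant-cohomology argument (the $G$-manifold $U/T$, whose fixed-point sets are nonempty for all abelian subgroups, plus the localization theorem), or equivalently the algebraic reworking via the Evens norm map as in Benson's and Evens' books, for the statement that every class in the inverse limit has a $p$-power in the image. That is the correct roadmap, and you are right that (ii) is where all the work lies; but as written you assert that step by appeal to the localization theorem and the norm map rather than carrying it out, so the proposal is a survey of the known proof rather than a proof.

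Two concrete points need repair even at this level of detail. First, being an $F$-isomorphism is strictly stronger than inducing a homeomorphism of prime spectra (of the commutative even part), so your opening ``equivalence'' is only an implication and should be presented as motivation, not as a reformulation of what must be proved. Second, the claimed reduction of (ii) to $p$-groups via the transfer does not work as stated: $\rst^G_S$ for $S\in\Syl_p(G)$ is split injective only as a map of $H^\bullet(G)$-modules, the splitting (corestriction) is not a ring homomorphism, and a class on $S$ lifting a compatible family over $\ca{C}_S$ need not be stable under $G$-fusion, so there is no direct way to descend it, or its $p$-th powers, to $H^\bullet(G)$. The split injectivity does legitimately reduce the kernel statement (i) to $p$-groups, but the standard arguments establish (ii) for arbitrary finite $G$ directly --- which is in fact what the remainder of your sketch does via $U/T$ and the norm, so for (ii) the reduction should simply be dropped.
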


\subsection{Quillen stratification and fusion}
\label{ss:stratfus} 
Let $H$ be a subgroup of the finite
group $G$ containing a Sylow $p$-subgroup of $G$ which controls 
fusion of elementary abelian $p$-subgroups. Then the canonical embedding functor
\begin{equation}
\label{eq:embedfun}
j_{H,G}\colon\ca{C}_H\longrightarrow\ca{C}_G
\end{equation}
is an equivalence of categories. Thus 
$H^\bullet(j_{H,G})\colon H^\bullet(\ca{C}_G)\to H^\bullet(\ca{C}_H)$
is an isomorphism of
connected, anti-commutative, $\N_0$-graded $\F_p$-algebras. Thus from
Theorem~\ref{thm:strat} one concludes the following property
which can be seen as a weak version of G.~Mislin's result (see \cite{mis:modp}).

\begin{prop}
\label{prop:fusF}
Let $H$ be a subgroup of the finite
group $G$ containing a Sylow $p$-subgroup of $G$ which controls 
fusion of elementary abelian $p$-subgroups.
Then $\rst^G_S\colon H^\bullet(G)\to H^\bullet(H)$ is an $F$-isomorphism.
\end{prop}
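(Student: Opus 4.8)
The plan is to chain together the two structural facts established just above: Quillen stratification (Theorem~\ref{thm:strat}), which relates $H^\bullet(G)$ and $H^\bullet(H)$ to the categorical limits $H^\bullet(\ca{C}_G)$ and $H^\bullet(\ca{C}_H)$ up to $F$-isomorphism, and the observation that control of fusion of elementary abelian $p$-subgroups makes the embedding functor $j_{H,G}\colon\ca{C}_H\to\ca{C}_G$ an equivalence of categories. Concretely, I would first note that by Theorem~\ref{thm:strat} both $q_G\colon H^\bullet(G)\to H^\bullet(\ca{C}_G)$ and $q_H\colon H^\bullet(H)\to H^\bullet(\ca{C}_H)$ are $F$-isomorphisms, and that these fit into a commuting square with $\rst^G_H$ on one side and $H^\bullet(j_{H,G})$ on the other (commutativity is just the fact that restricting to an elementary abelian $E\leq H$ factors through restriction to $H$).

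Next I would argue that $j_{H,G}$ is an equivalence of categories. Since $H$ contains a Sylow $p$-subgroup $S$ of $G$, every elementary abelian $p$-subgroup of $G$ is $G$-conjugate into $S\leq H$, so $j_{H,G}$ is essentially surjective; and the hypothesis that $H$ controls fusion of elementary abelian $p$-subgroups says precisely that for $E,E'\leq H$ elementary abelian, every $G$-conjugation carrying $E$ into $E'$ agrees on $E$ with an $H$-conjugation, which makes $j_{H,G}$ full — it is automatically faithful since the morphisms are honest group homomorphisms. Hence $j_{H,G}$ is an equivalence, so the induced map $H^\bullet(j_{H,G})\colon H^\bullet(\ca{C}_G)\to H^\bullet(\ca{C}_H)$ on inverse limits is an isomorphism of connected, anti-commutative, $\N_0$-graded $\F_p$-algebras.

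Finally I would conclude by a two-out-of-three style argument for $F$-isomorphisms: in the commuting square
\begin{equation}
\label{eq:fusFsquare}
\begin{aligned}
H^\bullet(j_{H,G})\circ q_G = q_H\circ\rst^G_H,
\end{aligned}
\end{equation}
the map $q_G$ is an $F$-isomorphism, $H^\bullet(j_{H,G})$ is an isomorphism, and $q_H$ is an $F$-isomorphism; so the composite $q_H\circ\rst^G_H$ is an $F$-isomorphism, and since $q_H$ is an $F$-isomorphism one deduces that $\rst^G_H$ is an $F$-isomorphism. This last deduction requires the elementary but necessary lemma that $F$-isomorphisms satisfy the relevant cancellation property — if $\beta\circ\alpha$ and $\beta$ are $F$-isomorphisms then so is $\alpha$ — which follows directly from the definition (nilpotence of kernels and $p$-power surjectivity of images) by a straightforward diagram chase. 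I expect the only genuine subtlety to be making this $F$-isomorphism bookkeeping precise, in particular checking the $p$-power surjectivity of $\image(\rst^G_H)$: given $b\in H^n(H)$, one pushes $b$ forward to $H^\bullet(\ca{C}_H)$, raises to a suitable $p^k$ to land in $\image(q_G\circ(H^\bullet(j_{H,G}))^{-1})$ — wait, more carefully, in $\image(q_H\circ\rst^G_H)$ — and then pulls back using surjectivity of $q_H$ modulo its nilpotent kernel, possibly raising to a further $p$-power. Everything else is formal.
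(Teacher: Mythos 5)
Your proposal is correct and follows essentially the same route as the paper: the paper also deduces the result from the commutative square relating $q_G$, $q_H$, $\rst^G_H$ and $H^\bullet(j_{H,G})$, together with Quillen stratification and the observation that control of fusion makes $j_{H,G}$ an equivalence of categories, so that $H^\bullet(j_{H,G})$ is an isomorphism. The only difference is that you spell out the details the paper leaves implicit (fullness/faithfulness/essential surjectivity of $j_{H,G}$ and the cancellation property of $F$-isomorphisms), and these details are sound.
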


\begin{proof}
This is a direct consequence of the commutative diagram
\begin{equation}
\label{dia:fusF}
\xymatrix{
H^\bullet(G)\ar[d]_{\rst^G_H}\ar[r]^{q_G}& H^\bullet(\ca{C}_G)\ar[d]^{H^\bullet(j_{H,G})}\\
H^\bullet(H)\ar[r]^{q_H}& H^\bullet(\ca{C}_H)
}
\end{equation}
and the previously mentioned remark.
\end{proof}

\subsection{Quillen's $p$-nilpotency criterion}
\label{ss:pnil}
In \cite{qu:pnil}, D.~Quillen has shown the following.

\begin{thm}[D. Quillen]
\label{thm:qucrit}
Let $G$ be a finite group, let $p$ be odd, and let $P$ be a Sylow $p$-subgroup
of $G$. Then $G$ is $p$-nilpotent if, and only if,
$\rst^G_P\colon H^\bullet(G)\to H^\bullet(P)$ is an $F$-isomorphism.
\end{thm}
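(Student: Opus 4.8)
The plan is to treat the two implications separately, the forward one being soft while the converse carries the weight. \textbf{If $G$ is $p$-nilpotent}, write $G=K\rtimes P$ with $K=O_{p'}(G)$ the normal $p$-complement and examine the Lyndon--Hochschild--Serre spectral sequence of $1\to K\to G\to P\to 1$, $E_2^{s,t}=H^s(P,H^t(K,\F_p))$. Since $p\nmid|K|$ one has $H^t(K,\F_p)=0$ for $t>0$ and $H^0(K,\F_p)=\F_p$ with trivial $P$-action, so the spectral sequence degenerates and inflation $H^\bullet(P)\to H^\bullet(G)$ is an isomorphism. As the composite $P\hookrightarrow G\twoheadrightarrow G/K\cong P$ is the identity, $\rst^G_P$ is a left inverse of inflation; the latter being bijective, $\rst^G_P$ is an isomorphism, hence in particular an $F$-isomorphism.

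\textbf{For the converse}, assume $\rst^G_P$ is an $F$-isomorphism. Transfer gives $\mathrm{cor}^G_P\circ\rst^G_P=[G:P]\cdot\mathrm{id}$, an automorphism since $p\nmid[G:P]$, so $\rst^G_P$ is injective; thus the hypothesis says precisely that $\rst^G_P$ induces a homeomorphism $V_P\to V_G$, where $V_H$ denotes the reduced spectrum of $H^\bullet(H)$ (so for elementary abelian $E$, $V_E$ is an affine space of dimension equal to the rank of $E$). Now I would invoke Quillen stratification: in the commutative square
\[
\xymatrix{
H^\bullet(G)\ar[d]_{\rst^G_P}\ar[r]^{q_G}& H^\bullet(\ca{C}_G)\ar[d]^{H^\bullet(j_{P,G})}\\
H^\bullet(P)\ar[r]^{q_P}& H^\bullet(\ca{C}_P)
}
\]
(with $j_{P,G}\colon\ca{C}_P\to\ca{C}_G$ the canonical embedding functor) the maps $q_G$ and $q_P$ are $F$-isomorphisms by Theorem~\ref{thm:strat}, so $H^\bullet(j_{P,G})$ is an $F$-isomorphism, and therefore $j_{P,G}$ induces a homeomorphism of the glued varieties $\varinjlim_{\ca{C}_P}V_E\to\varinjlim_{\ca{C}_G}V_E$. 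Every elementary abelian $p$-subgroup of $G$ is $G$-conjugate into $P$, so $j_{P,G}$ is essentially surjective, and it is automatically faithful; reading the homeomorphism above through the canonical stratification of these varieties --- by the rank of the supporting subgroup, and the $N_G(E)/C_G(E)$- resp.\ $N_P(E)/C_P(E)$-action on each stratum $V_E$ --- forces $j_{P,G}$ to be full as well. Hence $G$ introduces no fusion among elementary abelian $p$-subgroups: $N_G(E)/C_G(E)=N_P(E)/C_P(E)$ is a $p$-group for every elementary abelian $E\le P$.

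\textbf{From the absence of fusion to $p$-nilpotence.} By Frobenius's normal $p$-complement theorem it is enough that $N_G(Q)/C_G(Q)$ be a $p$-group for \emph{every} $p$-subgroup $Q$, and we may take $Q\le P$. If some $q$-element $x$ ($q\ne p$ prime) normalises $Q$ and acts non-trivially on it, coprime action --- the identity $[Q,x,x]=[Q,x]$ together with the vanishing of $H^1$ and $H^2$ of $\langle x\rangle$ with $p$-group coefficients --- produces an $\langle x\rangle$-invariant elementary abelian subgroup $E\le Q\le P$ on which $x$ still acts non-trivially, whence $q$ divides $|N_G(E)/C_G(E)|$, contradicting the previous paragraph. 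This is exactly where ``$p$ odd'' is used: for $p=2$ the action of $C_3$ on $Q_8$ exhibits a $p'$-group acting non-trivially on a $p$-group while fixing every invariant elementary abelian subgroup, and accordingly the criterion fails at $p=2$.

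\textbf{The main obstacle} I expect to be the implication ``$H^\bullet(j_{P,G})$ an $F$-isomorphism $\Rightarrow$ $j_{P,G}$ an equivalence of categories''. One must know precisely how the inverse limit $H^\bullet(\ca{C}_G)$ --- equivalently the glued stratified variety $\varinjlim_{\ca{C}_G}V_E$ --- records the conjugation action $N_G(E)/C_G(E)\hookrightarrow\Aut(E)$ on each stratum $V_E$, and argue that enlarging this group changes the local model; choosing $E$ fully normalised one has $N_P(E)/C_P(E)\in\Syl_p(N_G(E)/C_G(E))$, so any enlargement contributes a non-trivial $p'$-element, and the point is that such an element genuinely alters the quotient variety. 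This is the substance of Quillen's analysis of the equivariant cohomology ring and cannot be circumvented; the elementary coprime-action lemma of the previous paragraph must likewise be stated with the care its failure at $p=2$ demands.
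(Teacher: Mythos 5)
The paper does not prove this statement at all --- it is quoted from Quillen's article \cite{qu:pnil} --- so there is no in-paper argument to compare with; judged on its own terms, your proposal has a correct forward direction (the Lyndon--Hochschild--Serre collapse and the transfer/injectivity and two-out-of-three bookkeeping around the stratification square are fine), but the closing group-theoretic step contains a genuine error, independent of the step ``$H^\bullet(j_{P,G})$ an $F$-isomorphism $\Rightarrow$ $j_{P,G}$ an equivalence'' that you yourself flag as unproved.

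The gap is this: after (sketchily) extracting the equivalence of categories, you keep only its automizer consequence, ``$N_G(E)/\Cent_G(E)$ is a $p$-group for every elementary abelian $E\leq P$'', and try to finish with Frobenius plus the claim that a nontrivial coprime action on a $p$-subgroup $Q$ must be nontrivial on some invariant elementary abelian subgroup of $Q$. That claim is false for odd $p$: take $Q=p^{1+2}_{+}$ extraspecial of exponent $p$ and let $x$ be an automorphism of prime order $q\mid p+1$ lifting an element of $\SL_2(p)$ acting irreducibly on $Q/\Zen(Q)$ (e.g.\ $p=5$, $q=3$). Then $x$ acts nontrivially on $Q$ but trivially on $\Zen(Q)$, and every $x$-invariant elementary abelian subgroup of $Q$ lies in $\Zen(Q)$, since a noncentral invariant one would give $x$ an eigenline on $Q/\Zen(Q)$. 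Consequently the automizer condition alone does not imply $p$-nilpotence: in $G=Q\rtimes\langle x\rangle$ every automizer $N_G(E)/\Cent_G(E)$ of an elementary abelian $p$-subgroup is a $p$-group, yet $G$ has no normal $p$-complement. The correct classical lemma (the one the paper itself cites, \cite[Kap.~IV, Satz~5.12]{hupp1:grp}) concerns $\Omega_1(Q)$, which is generated by the elements of order $p$ but need not be elementary abelian, so it cannot be fed into your automizer hypothesis; your coprime-action sketch via $[Q,x,x]=[Q,x]$ and vanishing of $H^1,H^2$ does not produce the subgroup $E$ you need. For the same reason your remark that ``this is exactly where $p$ odd is used,'' illustrated by $C_3$ acting on $Q_8$, is misplaced: the identical phenomenon occurs at odd primes, so the parity cannot enter through that lemma. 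What is actually needed --- and what constitutes the group-theoretic heart of Quillen's proof --- is to exploit the full, morphism-level control of fusion of elementary abelian subgroups by $P$ (in the example above, $\langle v\rangle$ and $\langle xvx^{-1}\rangle$ are $G$-conjugate but not $Q$-conjugate, which is how the example is excluded), and this step is missing from your argument.
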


\begin{rem}
(a) Let $G$ be a finite group, and let $P$ be a Sylow $p$-subgroup.
By a result of J. Tate (see \cite{tat:nil}), $G$ is $p$-nilpotent if, and only if,
$\rst^G_P\colon H^\bullet(G)\to H^\bullet(P)$ is an isomorphism.
Every isomorphism of finitely generated, connected, anti-commutative,
 $\N_0$-graded $\F_p$-algebras is in particular also an $F$-isomorphism.
Hence Theorem~\ref{thm:qucrit} can be seen as a generalization of
J.~Tate's nilpotency criterion
for $p$ odd.

\noindent
(b) Let $p$ be odd. Let $G$ be a finite group, and let $P\in\Syl_p(G)$.
From Proposition~\ref{prop:fusF} and Theorem~\ref{thm:qucrit} one concludes
that $G$ is $p$-nilpotent if, and only if, $P$ controls fusion 
of elementary abelian $p$-subgroups. 
\end{rem}

\subsection{Finite $p$-central groups of height $k$}
\label{ss:finpcen}
For finite $p$-central groups of height $k$
one has the following fundamental property.

\begin{prop}
\label{prop:fungr}
Let $G$ be a finite $p$-central group
of height $k$, $k\geq 1$, and let $P$ be a Sylow $p$-subgroup of $G$. Then for any elementary $p$-abelian
subgroup $E$ of $G$, one has
\begin{equation}
\label{eq:eqfun1}
G=P.\Cent_G(E).
\end{equation}
\end{prop}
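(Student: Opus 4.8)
The plan is to exploit the defining property of $p$-centrality of height $k$ together with a Frattini-type argument. First I would let $E\leq G$ be an elementary abelian $p$-subgroup; since $E$ is generated by elements of order (at most) $p$, and $G$ is $p$-central of height $k$, we have $E\leq\Omega_1(G,p)\leq\zeta_k(G)$. The key consequence is that $\Omega_1(G,p)$ is a nilpotent normal $p$-subgroup of $G$, hence $\Omega_1(G,p)\leq O_p(G)\leq P$ for every $P\in\Syl_p(G)$; in particular $E\leq O_p(G)$ and $E$ is subnormal in $G$.

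\medskip

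Now I would set $N=N_G(E)$. Because $E\leq\zeta_k(G)$, every conjugate of $E$ is again a central (height $k$) elementary abelian subgroup, but more to the point: $E$ being contained in the center of the nilpotent group $\Omega_1(G,p)$ means $E\trianglelefteq$ is normalized by everything normalizing $\Omega_1(G,p)$, and since $\Omega_1(G,p)$ is characteristic, in fact $E$ need not itself be normal in $G$, so one must work a little. The cleanest route is: $E\leq O_p(G)\leq P$, and $O_p(G)\leq\Cent_G(E)$ would be too strong, so instead I apply the Frattini argument to the normal subgroup $O_p(G)$. Actually the sharpest tool is this: $E\leq\zeta_k(G)$ forces $[O^p(G),\,_{(k)}E]\leq\ldots$; better yet, since $E\leq\zeta_k(G)$, the subgroup $E$ centralizes $G/\zeta_{k-1}(G)$ acting by commutators, which by a standard argument (iterated use of the three-subgroup lemma, or simply that elements of $\zeta_k(G)$ of order $p$ have centralizer containing a Sylow subgroup) gives that $\Cent_G(E)$ has $p'$-index: indeed $G/\Cent_G(E)$ embeds in $\Aut(E)\cong\GL_d(\F_p)$, while the image of $E$ there is trivial and the image of $O_p(G)$ is a normal $p$-subgroup; the real content is that $G/\Cent_G(E)$ is a $p'$-group, equivalently $P\leq\Cent_G(E)$ up to conjugacy, which together with normality of $\Cent_G(E)$'s... — let me instead state the argument I actually trust: $E\leq\zeta_k(G)$ implies, by induction on $k$ using that $\zeta_k(G)/\zeta_{k-1}(G)=\zeta_1(G/\zeta_{k-1}(G))$, that $G/\Cent_G(E)$ is a $p$-group (each successive quotient acts trivially on the relevant section), hence $\Cent_G(E)\supseteq O^p(G)$, and then $G=\Cent_G(E)O^p(G)\cdot$ — no: one wants $G=P\cdot\Cent_G(E)$, which follows immediately once $|G:\Cent_G(E)|$ is a power of $p$, since then $\Cent_G(E)$ contains a full Sylow $p'$-structure and $P\Cent_G(E)$ has index dividing both $|G:P|$ and $|G:\Cent_G(E)|$, forcing $P\Cent_G(E)=G$. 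So the crux reduces to: \textbf{$E\leq\zeta_k(G)$ implies $G/\Cent_G(E)$ is a $p$-group.}

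\medskip

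For that crux I would argue: $\Cent_G(E)\trianglelefteq G$ (centralizer of a subgroup of the center of a term of the upper central series — actually centralizer of any characteristic-ish subgroup; here $E$ need not be characteristic, but $\Cent_G(E)$ normality follows since $E\leq\zeta_k(G)$ gives $[\Cent_G(E),G,\,_{(k)}?]$... safest: replace $E$ by $\langle E^G\rangle\leq\zeta_k(G)$, a normal elementary abelian — wait, $\langle E^G\rangle\leq\Omega_1(G,p)$ but may not be elementary abelian if its nilpotency class exceeds $1$). Hmm. The honest obstacle here is handling non-normality of $E$; the fix is to note $E\leq\zeta_k(G)\cap\Omega_1(G,p)$, and $\zeta_k(G)\cap\Omega_1(G,p)$ is abelian of exponent $p$ when $k\leq p-1$ (or under the paper's running hypotheses), hence IS normal and elementary abelian, and contains $E$; call it $\bar E$. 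Then $\Cent_G(\bar E)\leq\Cent_G(E)$ is normal, $G/\Cent_G(\bar E)\hookrightarrow\Aut(\bar E)$, and since $\bar E\leq\zeta_k(G)$, the group $G$ acts on $\bar E$ with the property that the upper central series terms sandwich it, yielding (by the standard fact that a group acting on an abelian $p$-group, trivially on each factor of a series of length $\leq$ something, is a $p$-group — here via $\bar E\leq\zeta_k(G)$ one gets $[\bar E,\,_{(1)}G]\leq\zeta_{k-1}(G)\cap\bar E$, descending) that $G/\Cent_G(\bar E)$ is a $p$-group. Thus $G=P\cdot\Cent_G(\bar E)\leq P\cdot\Cent_G(E)\leq G$, giving \eqref{eq:eqfun1}.

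\medskip

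I expect the \emph{main obstacle} to be precisely the bookkeeping around whether $E$ (or a suitable enlargement) is normal and elementary abelian, and making rigorous the step ``$\bar E\leq\zeta_k(G)$ $\Rightarrow$ $G/\Cent_G(\bar E)$ is a $p$-group''; this should come down to a clean induction on $k$ using $\zeta_k/\zeta_{k-1}=\Zen(G/\zeta_{k-1})$ and the three-subgroup lemma, but it is the part that must be written carefully. Everything after that — deducing $G=P\cdot\Cent_G(E)$ from $p$-power index of a normal subgroup containing $\Cent_G(E)$ — is a one-line order argument.
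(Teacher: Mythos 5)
Your overall reduction is sound and close in spirit to the paper's: both arguments come down to showing that $\Cent_G(E)$ contains a (normal) subgroup of $p$-power index in $G$, after which the order count $|P\,\Cent_G(E)|\geq |P|\,|\Cent_G(E)|/|P\cap \Cent_G(E)|\geq |G|$ gives \eqref{eq:eqfun1}. But the step you lean on to handle the non-normality of $E$ is wrong: $\bar E:=\zeta_k(G)\cap\Omega_1(G,p)$ is just $\Omega_1(G,p)$ (the hypothesis is precisely $\Omega_1(G,p)\leq\zeta_k(G)$), and this group need not be abelian, let alone elementary abelian. For instance, an extraspecial group $G$ of order $p^3$ and exponent $p$, $p$ odd, is $p$-central of height $2$ with $\Omega_1(G,p)=G$ nonabelian. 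Note also that the proposition is stated for arbitrary $k\geq 1$, so the hedge ``$k\leq p-1$'' is not available; and even when $k\leq p-1$, regularity only yields exponent $p$, never abelianness. So, as written, the construction of a normal elementary abelian $\bar E\supseteq E$ fails, and with it your route to ``$G/\Cent_G(\bar E)$ is a $p$-group''.

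The gap is repairable, because the stability fact you invoke does not require abelianness: take $\bar E=\Omega_1(G,p)$ itself. It is a characteristic $p$-subgroup of $G$ (being generated by $p$-elements inside the nilpotent group $\zeta_k(G)$), and the chain $\Omega_1(G,p)\cap\zeta_i(G)$, $i=k,k-1,\dots,0$, is $G$-invariant with $[\Omega_1(G,p)\cap\zeta_i(G),G]\leq \Omega_1(G,p)\cap\zeta_{i-1}(G)$; since a $p'$-automorphism of a finite $p$-group stabilizing such a chain is trivial, $G/\Cent_G(\Omega_1(G,p))$ is a $p$-group, and $\Cent_G(\Omega_1(G,p))\leq\Cent_G(E)$ then finishes exactly as you intend. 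Even after this repair your mechanism differs from the paper's: there the Hall--Petrescu collection formula is used to show directly that $[y,x^{p^{e+k}}]=1$ for all $y\in\Omega_1(G,p)$, $x\in G$, where $p^e$ is the exponent of $\Omega_1(G,p)$, so that $G^{p^{e+k}}\leq\Cent_G(\Omega_1(G,p))$ and $G=P.G^{p^{e+k}}$; your corrected version replaces this power-commutator computation by the stability-group theorem applied to the upper central series. Both mechanisms work, but the proof as submitted rests on a false intermediate claim at its crux.
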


\begin{proof}
The subgroup $\Omega_1(G,p)$ is nilpotent of class at most $k$, and therefore
a finite $p$-group. Let $p^e$ be the exponent of $\Omega_1(G,p)$.
By the Hall-Petrescu collection formula (see \cite[Theorem 2.1]{gust:omega}), 
for any $y\in \Omega_1(G,p)$ and $x\in G$ one has
\begin{equation}
\label{eq:bas-0}
[y,x^{p^{e+k}}]\in 
\prod_{0\leq i\leq e+k}[\Omega_1(G,p),_{(p^i)}G]^{p^{e+k-i}}=\{1\}.
\end{equation}
In particular, $G^{p^{e+k}}\leq \Cent_G(\Omega_1(G))$.
Moreover, for any Sylow $p$-subgroup $P$ of $G$ one has
$G=P.G^{p^{e+k}}$. Since every elementary $p$-abelian subgroup $E$
of $G$ is contained in $\Omega_1(G,p)$, this yields the claim.
\end{proof}

\begin{proof}[Proof of Theorem~A]
Let $G$ be a finite $p$-central group of height $k$, $p$ odd.
By Proposition~\ref{prop:fungr}, $P$ controls fusion of elementary abelian
$p$-subgroups. Then Proposition~\ref{prop:fusF} and Theorem~\ref{thm:qucrit}
yield the claim.
\end{proof}

\section{Finite $p$-groups which determine $p$-nilpotency locally}
\label{s:dnl}
Let $p$ be an odd prime number.
For $m\geq 1$ we denote by $\boC_{p^m}$ the cyclic
group of order $p^m$. 
Let $Y_p(1)=\boC_p\wr\boC_p$ denote the regular
wreath product, and let $\beta\colon\boC_p\wr\boC_p\to\boC_p$
denote the canonical homomorphism with elementary abelian kernel.
By $Y_p(m)$, $m\geq 1$, we denote the finite $p$-group
which is the pull-back of the diagram
\begin{equation}
\label{eq:defX}
\xymatrix{
\boC_{p^m}\ar[r]&\boC_p\\
Y_p(m)\ar@{-->}[r]\ar@{-->}[u]&\boC_p\wr\boC_p\ar[u]_\beta
}
\end{equation}
In \cite{wei:dnl} it was shown that every finite $p$-group $P$ not containing
subgroups isomorphic to $Y_p(m)$, $m\geq 1$, determines $p$-nilpotency locally.
As a consequence we obtain the following result.

\begin{thm}
\label{thm:dnl}
Let $p$ be odd, and let $P$ be a finite $p$-central $p$-group of height $p-1$.
Then $P$ does not contain subgroups isomorphic to $Y_p(m)$ for some $m\geq 1$.
In particular, $P$ determines $p$-nilpotency locally.
\end{thm}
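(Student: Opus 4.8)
The plan is to show that $Y_p(m)$ is \emph{not} $p$-central of height $p-1$ for any $m\geq 1$; since $p$-centrality of height $p-1$ passes to subgroups (any element of order $p$ in a subgroup $Q\leq P$ lies in $\zeta_{p-1}(P)\cap Q\leq\zeta_{p-1}(Q)$), this immediately rules out $Y_p(m)\leq P$, and then the cited result of \cite{wei:dnl} gives the conclusion. So the whole problem reduces to analyzing the single family of groups $Y_p(m)$.

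First I would set up $Y_p(m)$ concretely. Write $W=Y_p(1)=\boC_p\wr\boC_p$: its base group $B\cong\F_p^p$ carries the regular $\boC_p$-action, $W=B\rtimes\langle t\rangle$ with $t^p=1$, and $\beta\colon W\to\boC_p$ is the map with kernel $B$ (so $\beta$ is induced by $t\mapsto$ generator). The augmentation-type submodule $B_0=\kernel(B\to\F_p,\ (b_i)\mapsto\sum b_i)$ is the unique $\boC_p$-submodule of $B$ of index $p$, and $\gamma_i(W)\cap B$ runs through the radical filtration of $B_0$; in particular $\Omega_1(W)=W$ has exponent $p$ and nilpotency class exactly $p$ (the Lie algebra $\gr$ of $W$ has $B_0$ as a uniserial $\F_p[t-1]$-module of length $p-1$ sitting below $\langle t\rangle$). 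Then $Y_p(m)=\{(c,w)\in\boC_{p^m}\times W\mid \bar c=\beta(w)\}$, so there is a short exact sequence $1\to\boC_{p^{m-1}}\times B\to Y_p(m)\to W\to\ldots$ — more usefully, $Y_p(m)$ has a cyclic subgroup $\boC_{p^m}=\langle s\rangle$ lifting $\langle t\rangle$ and $s$ acts on $B$ through $s\mapsto t$, so $Y_p(m)=B\rtimes\langle s\rangle$ with $s$ of order $p^m$ acting with $(s-1)^p=0$ but $(s-1)^{p-1}\neq 0$ on $B$.

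Next, the key computation: I would locate $\zeta_{p-1}(Y_p(m))$ and an element of order $p$ outside it. The center is $\Zen(Y_p(m))=\langle s^p\rangle\times(B^{\langle s\rangle})$ where $B^{\langle s\rangle}=B^{\langle t\rangle}$ is the $1$-dimensional fixed space (spanned by $\Delta=\sum e_i$), of order $p$ (times $p^{m-1}$ from $s^p$). More generally the ascending central series of $Y_p(m)$, restricted to $B$, is dual to the lower central series: $\zeta_j(Y_p(m))\cap B = (B^{\langle s\rangle})^{\perp\ldots}$, i.e. the $j$-th term of the socle filtration of $B$ as $\F_p[s-1]$-module, which for $j\leq p-1$ sits inside $B_0$ and has dimension exactly $j$; and $\zeta_j(Y_p(m))$ contains $s^p$ but no other power of $s$ until $j$ is large. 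Since $B$ is a uniserial module of length $p$, the socle series reaches all of $B$ only at step $p$, not step $p-1$. Hence $\zeta_{p-1}(Y_p(m))\cap B$ is the \emph{unique} submodule of dimension $p-1$, namely $B_0$, a proper subgroup of $B$. But $B$ has exponent $p$, so any $b\in B\setminus B_0$ is an element of order $p$ not lying in $\zeta_{p-1}(Y_p(m))$. Therefore $\Omega_1(Y_p(m))\not\leq\zeta_{p-1}(Y_p(m))$: the group is not $p$-central of height $p-1$, and this holds for every $m\geq 1$ (including $m=1$, where $B\setminus B_0$ is still nonempty). Combined with \cite{wei:dnl} as above, $P$ determines $p$-nilpotency locally.

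The main obstacle is the precise identification of the ascending central series of $Y_p(m)$ with the socle filtration of the uniserial module $B$ — i.e. verifying that $\zeta_{p-1}$ does \emph{not} already swallow all of $B$. This rests on the fact that $(t-1)$ acts on $B=\F_p[\boC_p]$ as a single nilpotent Jordan block of size $p$, so $B$ is uniserial of length $p$ and its socle series is strictly increasing with one-dimensional steps, never reaching $B$ before the $p$-th step. Everything else (the description of $\Zen$, the semidirect-product structure, exponent $p$ on $B$, passing $p$-centrality of height $p-1$ to subgroups) is routine. A subtlety worth a sentence of care: one must make sure no element of $Y_p(m)$ of order $p$ hides \emph{inside} a coset of $B$ involving $s$ — but any such element, raised to the $p$-th power, involves $s^p\neq 1$ unless the $\langle s\rangle$-component is trivial, so elements of order $p$ all lie in $B$, and the argument above applies verbatim.
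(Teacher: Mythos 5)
Your proposal is correct and follows the same route as the paper: the paper's proof simply asserts that $Y_p(m)$ cannot be $p$-central of height $p-1$ and you supply the verification the paper leaves implicit, namely that in $Y_p(m)=B\rtimes\boC_{p^m}$ (with $B\cong\F_p[\boC_p]$ uniserial) one has $\zeta_{p-1}(Y_p(m))\cap B=B_0$, so any $b\in B\setminus B_0$ is an order-$p$ element outside $\zeta_{p-1}$; together with the (correct) observation that height-$(p-1)$ $p$-centrality passes to subgroups and the citation of \cite{wei:dnl}, this proves the theorem. Two side remarks you make are false but harmless: $\boC_p\wr\boC_p$ has exponent $p^2$, not $p$ (an element $bt$ with coordinate sum nonzero has order $p^2$), and elements of order $p$ do \emph{not} all lie in $B$ (e.g.\ $bs$ with $b\in B_0$ when $m=1$, or $s^{p^{m-1}}$ when $m\geq 2$) --- neither claim is needed, since extra order-$p$ elements only enlarge $\Omega_1$ and you already exhibit one outside $\zeta_{p-1}$.
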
 

\begin{proof}
Suppose $P$ contains a subgroup isomorphic to $Y_p(m)$, $m\geq 1$.
This implies that $Y_p(m)$ is $p$-central of height $p-1$, a contradiction.
\end{proof}

\section{Semi-simple modules for finite $p$-soluble groups}
\label{s:psol}
Let $p$ be a prime number, and let
$\F_q$, $q=p^f$, denote the finite field with $q$ elements.
Then there exists a unique unramified extension $\Q_q/\Q_p$
of the $p$-adic numbers of degree $f$. Moreover,
$\Z_q=\incl_{\Q_q}(\Z_q)$ is a complete discrete valuation domain
with maximal ideal $p.\Z_q$ and residue field isomorphic to $\F_q$.

For a finite group $G$ we will call a finitely generated left $\Z_q[G]$-module $L$ a
left {\it $\Z_q[G]$-lattice}, if it is also a free $\Z_q$-module.

The purpose of this section is to translate
I.~M.~Isaacs version (see \cite{isa:psol1},~\cite{isa:psol2})
of the Fong-Swan-Rukolaine theorem (see \cite[Thm.~22.1]{currei:vol1}) in a module theoretic way
(see Thm.~\ref{thm:isaacs}). Using results of P.~H.~Tiep and A.~E.~Zalesskii
one can then
describe the size of a Jordan block of an element of order $p$
in a $p$-soluble group $G$ on any finite semi-simple left $\F_q[G]$-module
(see Cor.~\ref{cor:jor}).
This description will allow us to establish a sufficient criterion for the normality
of a Sylow $p$-subgroup in a finite $p$-soluble group provided $p$ is odd
(see Lemma~\ref{lemma:crit}).

\subsection{The Fong-Swan-Rukolaine theorem}
\label{ss:isaacs}
The following theorem can be seen as a module theoretic version of
I.~M.~Isaacs version 
of the Fong-Swan-Rukolaine theorem.

\begin{thm}
\label{thm:isaacs}
Let $G$ be a finite $p$-soluble group, let $\bE$ be a finite field of characteristic $p$
and let $M$ be an
absolutely irreducible left $\bE[G]$-module.
Then there exists a finite extension $\F_q/\bE$ of $\bE$ and 
a left $\Z_q[G]$-lattice $L$ with the following properties:
\begin{itemize}
\item[(i)] $L/p.L\simeq M\otimes_{\bE}\F_q$;
\item[(ii)] $L\otimes_{\Z_q}\Q_q$ is an absolutely irreducible left
$\Q_q[G]$-module.
\end{itemize}
\end{thm}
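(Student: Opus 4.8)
The plan is to reduce the statement to I.~M.~Isaacs' version of the Fong--Swan--Rukolaine theorem, which provides a lift of an absolutely irreducible modular representation of a $p$-soluble group to an ordinary representation that is again absolutely irreducible after passing to a suitable $p$-adic field, and then to repackage that lift in module/lattice language. First I would fix an algebraic closure $\bar\bE$ of $\bE$ and regard $M$ as realized over $\bE$; since $M$ is absolutely irreducible, $\End_{\bE[G]}(M)=\bE$. Viewing $G$ as a group of matrices over $\bE$ via the representation $\rho\colon G\to\GL(M)$, Isaacs' theorem (in the formulation of \cite{isa:psol1},~\cite{isa:psol2}; see also \cite[Thm.~22.1]{currei:vol1}) yields an ordinary irreducible character $\chi$ of $G$ in characteristic $0$ whose reduction modulo $p$ is the Brauer character of $M$. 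The field-of-definition issues are handled by choosing $\F_q$ large enough: take $f$ so that $\F_q=\F_{p^f}$ contains $\bE$ and is a splitting field for $G$ in characteristic $p$, and simultaneously so that the unramified extension $\Q_q/\Q_p$ of degree $f$ (which by \S\ref{s:psol} has residue field $\F_q$) is large enough to realize $\chi$; enlarging $f$ to a common multiple makes both demands compatible.

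Next I would produce the lattice. Over $\Q_q$ the character $\chi$ is afforded by an absolutely irreducible module $V$; choosing any full $\Z_q[G]$-lattice $L_0\subset V$ (possible since $\Z_q$ is a complete discrete valuation domain with residue field $\F_q$, so $G$-stable lattices exist and are finitely generated free $\Z_q$-modules) gives a $\Z_q[G]$-lattice with $L_0\otimes_{\Z_q}\Q_q=V$ absolutely irreducible, which is property~(ii). For property~(i), the reduction $L_0/p.L_0$ is an $\F_q[G]$-module whose Brauer character equals the reduction of $\chi$, hence equals the Brauer character of $M\otimes_\bE\F_q$; since $\F_q$ is a splitting field and $M\otimes_\bE\F_q$ is irreducible, its composition factors are determined by its Brauer character, so $M\otimes_\bE\F_q$ occurs in $L_0/p.L_0$ with the right multiplicity. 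To upgrade ``$M\otimes_\bE\F_q$ is a composition factor of the reduction'' to ``the reduction \emph{equals} $M\otimes_\bE\F_q$'', I would invoke the decomposition-matrix input from Isaacs' theory: for $p$-soluble $G$ the Fong--Swan theorem says every irreducible Brauer character lifts to an ordinary \emph{irreducible} character whose decomposition number onto that Brauer character is $1$ and — crucially, in Isaacs' refined version — one may choose the lift so that the corresponding row of the decomposition matrix has a single nonzero entry. Taking $\chi$ to be such a lift of the Brauer character of $M$ forces $L_0/p.L_0$ to be irreducible, hence isomorphic to $M\otimes_\bE\F_q$, giving~(i). Setting $L=L_0$ completes the argument.

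The main obstacle, and the step I would be most careful about, is exactly this last point: an arbitrary Fong--Swan lift need not reduce to an \emph{irreducible} module — a priori the reduction only has the desired module as one composition factor among several — so the whole force of Isaacs' strengthening of Fong--Swan (the existence of a lift with a unit-vector row in the decomposition matrix) is what is being used, and the proof should cite \cite{isa:psol1},~\cite{isa:psol2} precisely for that statement rather than for the classical Fong--Swan theorem alone. A secondary, more routine point is bookkeeping with fields: making one choice of $\F_q$ serve simultaneously as a modular splitting field containing $\bE$ and as the residue field of a $p$-adic field over which $\chi$ is realized and absolutely irreducible; this is handled, as above, by passing to a large enough unramified extension, using that absolute irreducibility of $V$ is preserved under further unramified base change and that reduction mod $p$ commutes with such base change.
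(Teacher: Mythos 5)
Your overall strategy coincides with the paper's (lift the Brauer character of $M$ via Isaacs' version of Fong--Swan--Rukolaine, realize the lift over a $p$-adic field with residue field $\F_q$, take a full $\Z_q[G]$-lattice and reduce), but there is a genuine gap at the step you dismiss as bookkeeping. You assert that one can choose $f$ ``so that the unramified extension $\Q_q/\Q_p$ of degree $f$ is large enough to realize $\chi$''. For an arbitrary Fong--Swan lift this is simply false: the field of values $\Q_p(\chi)$ may be ramified over $\Q_p$, and then no unramified extension whatsoever affords $\chi$, so no enlargement of $f$ helps. (Already for $G$ cyclic of order $p$, the nontrivial linear characters are lifts of the trivial Brauer character, and they require the totally ramified field $\Q_p(\zeta_p)$.) Since the theorem's conclusion is stated with $\Z_q$ unramified and reduction modulo $p$, unramifiedness is essential here, not a convenience. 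This is exactly where the paper uses the genuinely nontrivial content of Isaacs' theorem: the lift can be chosen so that \emph{all its character values lie in} $\Q_p^{\un}$, and then \cite[Lemma~3.3]{tiza:long} (a descent/Schur-index argument over local fields) produces a finite unramified $\Q_q/\Q_p$ and an absolutely irreducible $\Q_q[G]$-module $W$ with $W\otimes_{\Q_q}\bQ_p$ affording the lift. Your proposal cites Isaacs but never invokes the field-of-values statement, and it omits the Tiep--Zalesskii descent entirely, so the existence of the absolutely irreducible $\Q_q[G]$-module $V$ in your second paragraph is unjustified.

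Relatedly, you misidentify what the ``refinement'' is needed for. The point you single out as the main obstacle --- that the reduction of the lattice be irreducible rather than merely contain $M\otimes_{\bE}\F_q$ as a composition factor --- is automatic for \emph{any} Fong--Swan lift: by definition the lift restricts on $p$-regular classes to the Brauer character of $M$, so the reduction of any full lattice has Brauer character $\bchi_M$, hence a single composition factor, hence is isomorphic to $M\otimes_{\bE}\F_q$; no strengthening about decomposition-matrix rows is needed there. The part of your argument establishing (i) is therefore fine (and agrees with the paper), but the real work of the proof --- arranging unramified character values and descending to a finite unramified extension with absolute irreducibility, i.e.\ property (ii) --- is missing.
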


\begin{proof}
Let $\bbE$ denote an algebraic closure of $\bE$, let $\bQ_p$ denote an
algebraic closure of $\Q_p$, and let
$\Q_p^{\un}$ denote the maximal unramified extension of $\Q_p$ in $\bQ_p$.
Let $\bchi_M$ denote the Brauer character of $M\otimes_{\bE}{\bbE}$.
By \cite{isa:psol1} and \cite{isa:psol2}, there exists an irreducible left $\bQ_p[G]$-module
$V$ such that the restriction of its character to $p$-regular conjugacy classes of $G$ agrees 
with $\bchi_M$, and all its character values are contained in $\Q_p^{\un}$.
Then by \cite[Lemma~3.3]{tiza:long}, there exists  a finite unramified
extension $\Q_q/\Q_p$ of $\Q_p$ and an absolutely irreducible 
left $\Q_q[G]$-module $W$ such that 
$V\simeq W\otimes_{\Q_q}\bQ_p$.  
We may assume that $\F_q$ contains $\bE$.
Let $L< W$ be a full left $\Z_q[G]$-lattice contained in $W$.
Then by construction $L/p.L\simeq M\otimes_{\F_q}\bE$, and
$L\otimes_{\Z_q}\Q_q\simeq W$. This yields the claim.
\end{proof}

From \cite[Lemma~3.1]{tiza:long} one concludes the following corollary.

\begin{cor}
\label{cor:jor}
Let $G$ be a finite $p$-soluble group, let $M$ be a finite
semi-simple $\F_q[G]$-module, and let $\phi\colon G\to\End_{\F_q}(M)$ be the
representation associated to $M$. Then for every element $g\in G$ of order $p$
the size of a Jordan block of $\phi(g)$ is either $1$, $p-1$ or $p$.
\end{cor}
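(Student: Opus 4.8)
The plan is to reduce the corollary to the absolutely irreducible case and then apply Theorem~\ref{thm:isaacs} together with the cited result \cite[Lemma~3.1]{tiza:long}. First I would observe that since $M$ is semi-simple, the size of the largest Jordan block of $\phi(g)$ on $M$ equals the maximum of the corresponding quantities over the composition factors of $M$; moreover, extending scalars from $\F_q$ to a finite extension field $\F_{q'}$ does not change the Jordan block sizes of $\phi(g)$ (rational canonical form is insensitive to field extension for a fixed linear operator, and $g$ has $p$-power order so its minimal polynomial is $(X-1)^e$ over the prime field already). Hence it suffices to treat the case in which $M$ is an absolutely irreducible $\F_q[G]$-module, after possibly enlarging $\F_q$.

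Next I would invoke Theorem~\ref{thm:isaacs}: after a further finite unramified extension, there is a $\Z_q[G]$-lattice $L$ with $L/pL \simeq M$ (up to the scalar extension already absorbed) and with $L\otimes_{\Z_q}\Q_q$ an absolutely irreducible $\Q_q[G]$-module, call it $W$. Now fix $g\in G$ of order $p$. On the characteristic-zero side, $\phi_W(g)$ is diagonalizable (its minimal polynomial divides $X^p-1$, which is separable over $\Q_q$), so it acts on $W$ with eigenvalues among the $p$-th roots of unity, each eigenvalue $\zeta^j$ occurring with some multiplicity $m_j$. The key input \cite[Lemma~3.1]{tiza:long} — which analyzes exactly how the eigenvalue multiplicities $m_0,\dots,m_{p-1}$ of an order-$p$ element on an irreducible characteristic-zero representation of a $p$-soluble group control the Jordan block sizes of the reduction mod $p$ — then tells us that the Jordan type of $\phi(g)$ acting on $L/pL \simeq M$ consists only of blocks of size $1$, $p-1$, or $p$. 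This is the heart of the argument and the place where $p$-solubility is genuinely used.

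Finally I would assemble the pieces: for the original semi-simple $M$ over the original field, each composition factor gives (after scalar extension, which is harmless as noted) an absolutely irreducible module to which the previous paragraph applies, so every Jordan block of $\phi(g)$ on that factor has size in $\{1,p-1,p\}$; taking the direct sum over composition factors preserves this property, and descending the scalars back does not change Jordan block sizes. Hence every Jordan block of $\phi(g)$ on $M$ has size $1$, $p-1$, or $p$, as claimed.

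\medskip

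The main obstacle, I expect, is purely a matter of correctly quoting and applying \cite[Lemma~3.1]{tiza:long}: the combinatorial statement there is about the eigenvalue multiplicities of an order-$p$ element on a \emph{characteristic-zero} irreducible of a $p$-soluble group and how these multiplicities must be "almost balanced" (all equal, or differing in a very restricted way) — it is this balancing that forces the mod-$p$ Jordan blocks into the three allowed sizes. Getting the precise hypotheses of that lemma to line up with the lattice $L$ produced by Theorem~\ref{thm:isaacs} (in particular that $W = L\otimes\Q_q$ is absolutely irreducible, which is exactly condition (ii) there) is the only delicate point; the reductions to the absolutely irreducible case and the field-extension invariance of Jordan type are routine.
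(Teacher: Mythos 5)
Your proposal is correct and follows essentially the same route as the paper, which simply cites \cite[Lemma~3.1]{tiza:long} for this corollary after having set up Theorem~\ref{thm:isaacs} precisely to supply the unramified characteristic-zero lift of each absolutely irreducible constituent. The reductions you spell out (passing to composition factors, enlarging the field, invariance of Jordan type under scalar extension) are the routine steps the paper leaves implicit.
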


Corollary~\ref{cor:jor} can be translated into a group theoretic context as follows:

\begin{lemma}
\label{lemma:crit}
Let $G$ be a finite $p$-soluble group, $p\not=2$, and let $P\in\Syl_p(G)$.
Let $M$ be a finitely generated $\F_p[G]$-module,
and let $\phi_M\colon G\to\GL_{\F_p}(M)$ denote the associated 
representation. Assume further that
\begin{itemize}
\item[(i)] $\kernel(\phi_M)\leq O_p(G)$;
\item[(ii)] for all $g\in P$, $(\phi_M(g)-\iid_M)^{p-2}=0$.
\end{itemize}
Then $P=\kernel(\phi_M)$; in particular, $P$ is normal in $G$.
\end{lemma}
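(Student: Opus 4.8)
The plan is to reduce to Corollary~\ref{cor:jor} by decomposing $M$ into semi-simple pieces. First I would pass to a composition series of $M$ as an $\F_p[G]$-module, say $0=M_0<M_1<\dots<M_r=M$ with semi-simple quotients $S_j=M_j/M_{j-1}$. For each $g\in P$ and each $j$, the induced action of $g$ on $S_j$ has, by Corollary~\ref{cor:jor}, every Jordan block of size $1$, $p-1$, or $p$. But hypothesis (ii) says $(\phi_M(g)-\iid_M)^{p-2}=0$, and this passes to each subquotient $S_j$, so the action of $g$ on $S_j$ is unipotent with $(\phi_{S_j}(g)-\iid)^{p-2}=0$; hence every Jordan block of $g$ on $S_j$ has size at most $p-2$. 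Combining with Corollary~\ref{cor:jor}, every Jordan block of $g$ on every $S_j$ has size exactly $1$, i.e.\ $g$ acts trivially on each $S_j$. Therefore $P$ acts trivially on every composition factor of $M$; equivalently, $\phi_M(P)$ lies in the unipotent radical of the image, so $\phi_M(g)-\iid_M$ is nilpotent for each $g\in P$ and $P\kernel(\phi_M)/\kernel(\phi_M)$ is a $p$-group acting trivially on all composition factors of $M$.

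Next I would show $P\leq\kernel(\phi_M)$. Let $\bar G=G/\kernel(\phi_M)$ and $\bar P$ the image of $P$; by (i), $\kernel(\phi_M)\leq O_p(G)$, so $\bar P\in\Syl_p(\bar G)$ and $\bar G$ is still $p$-soluble. The previous paragraph shows $\bar P$ acts trivially on every $\bar G$-composition factor of $M$. I claim this forces $\bar P=1$. Indeed, $O_{p'}(\bar G)=1$: any nontrivial normal $p'$-subgroup would have to act nontrivially and semisimply on some composition factor, but one can arrange (replacing $G$ by $G/O_{p'}(G)$ is not available here, so instead) — more directly, in a $p$-soluble group with a faithful module $M$ all of whose composition factors are $\bar P$-trivial, consider $F=O_{p'}(\bar G)$. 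The module $M$ restricted to $F$ is semisimple (Clifford/Maschke since $p\nmid|F|$), and $\bar G$ permutes the homogeneous components; since $\bar P$ acts trivially on every composition factor, $\bar P$ centralizes $F$ on $M$, and as $M$ is faithful, $[\bar P,F]=1$. Then $\bar P$ is subnormal... rather, $\bar P\times F$-type arguments show $F\leq C_{\bar G}(\bar P)$. Now $C_{\bar G}(O_{p'}(\bar G))\supseteq O_{p'}(\bar G)$ forces, via the standard fact that in a $p$-soluble group $C_{\bar G}(O_{p',p}(\bar G))\leq O_{p',p}(\bar G)$, that $\bar P\leq O_{p',p}(\bar G)$, hence $\bar P=O_p(\bar G/O_{p'}(\bar G))$ lifts to a normal $p$-subgroup; and since $M$ is faithful with all composition factors $\bar P$-trivial while $O_{p'}(\bar G)$ acts faithfully and semisimply, a short argument gives $\bar P=1$.

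The main obstacle will be this last step: turning ``$P$ acts trivially on every composition factor of $M$'' plus faithfulness plus $p$-solubility into ``$P=1$ modulo $\kernel(\phi_M)$''. The clean way is to use the Hall–Higman-type fact that for a $p$-soluble group $H$ with $O_{p'}(H)=1$ one has $C_H(O_p(H))\leq O_p(H)$, applied after first showing $O_{p'}(\bar G)=1$. To get $O_{p'}(\bar G)=1$: if $1\neq F=O_{p'}(\bar G)$, then by Clifford theory $M|_F$ is a sum of nontrivial-and-trivial $F$-homogeneous components; faithfulness of $M$ forces $F$ to act nontrivially on some composition factor $S$, but $\bar P$ acts trivially on $S$ and $\bar PF/ \dots$ — here one uses that $S$ is an irreducible $\bar G$-module on which $\bar P$ is trivial, so $S$ is really a module for $\bar G/\langle\bar P^{\bar G}\rangle$; provided $\langle \bar P^{\bar G}\rangle$ contains a Sylow $p$-subgroup this quotient is a $p'$-group and the argument closes. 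I would therefore first reduce to the case $\bar G=\langle\bar P^{\bar G}\rangle$ (replace $\bar G$ by this subgroup, which still contains a Sylow $p$-subgroup and still satisfies the hypotheses with the restricted module), conclude $\bar G$ is generated by conjugates of a normal-in-its-own-closure $p$-subgroup acting trivially on all composition factors, whence $\bar G$ is a $p$-group, whence — being $p$-soluble with faithful module trivial on all composition factors — $\bar G=1$, i.e.\ $P\leq\kernel(\phi_M)$. Combined with (i) this gives $P=\kernel(\phi_M)$, and $\kernel(\phi_M)\trianglelefteq G$ gives $P\trianglelefteq G$.
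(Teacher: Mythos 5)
Your first paragraph is essentially the paper's argument: pass to semi-simple subquotients of $M$, invoke Corollary~\ref{cor:jor}, and use hypothesis (ii) to rule out Jordan blocks of size $p-1$ and $p$. One small inaccuracy there: Corollary~\ref{cor:jor} applies only to elements of \emph{order $p$}, not to arbitrary $g\in P$, so you must apply it to an element of order $p$ of the image $\phi_{S_j}(P)$ (lifting it to some, possibly higher-order, element of $P$ and using (ii) for that lift). This is exactly how the paper argues, working with $P^\circ=\phi_{M^{\ssl}}(P)$ on the semisimplification $M^{\ssl}$, and the fix is routine.

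The genuine gap is in your second half. The implication ``$\bar P$ acts trivially on every composition factor of $M$, $M$ is faithful for $\bar G$, hence $\bar P=1$'' is false: nontrivial $p$-groups act faithfully and unipotently on $\F_p$-modules. Concretely, take $G=P=\boC_p$ with $p\geq 5$ acting on $M=\F_p^2$ through a single Jordan block of size $2$; then (i) and (ii) hold but $\kernel(\phi_M)=1$, so your culminating step ``$\bar G$ is a $p$-group with a faithful module trivial on all composition factors, whence $\bar G=1$'' fails already for $p$-groups, and no Clifford-theoretic manipulation of $O_{p'}(\bar G)$ can repair it (the same example shows that the literal equality $P=\kernel(\phi_M)$ is not what one can hope to extract; the usable content of the lemma is the normality of $P$). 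The paper never claims $\phi_M(P)=1$. Instead it observes that $\kernel(\phi_{M^{\ssl}})/\kernel(\phi_M)$ embeds into $\iid_M+\Jac(\image(\tphi_M))$ and is therefore a $p$-group, so by hypothesis (i) $\kernel(\phi_{M^{\ssl}})$ is a \emph{normal $p$-subgroup} of $G$; the Jordan-block argument then gives $P\leq\kernel(\phi_{M^{\ssl}})\leq O_p(G)\leq P$, hence $P=O_p(G)$ is normal. Your proposal is missing precisely this combined use of hypothesis (i) and the Jacobson-radical (unipotent-kernel) observation, which is what converts ``$P$ is trivial on all composition factors'' into normality of $P$ rather than into the (false) triviality of $\bar P$.
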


\begin{proof}
Let $\pi\colon M\to M^{\ssl}$ denote the semi-simplification of the $\F_p[G]$-module
$M$, and let 
$\phi_{M^{\ssl}}\colon G\to\GL_{\F_p}(M^{\ssl})$ denote the associated $\F_p$-linear representation.
By~(i), $\kernel(\phi_M)$ is a normal $p$-subgroup.
Let $\tphi_M\colon\F_p[G]\to\End_{\F_p}(M)$ denote the associated
homomorphism of $\F_p$-algebras. Then
\begin{equation}
\label{eq:jac}
\kernel(\phi_{M^{\ssl}})/\kernel(\phi_M)\leq \iid_M+\Jac(\image(\tphi_M)),
\end{equation}
where $\Jac(\argu)$ denotes the Jacobson radical.
In particular, $\kernel(\phi_{M^{\ssl}})/\kernel(\phi_M)$ is a $p$-group, and thus
$\kernel(\phi_{M^{\ssl}})$ is a normal $p$-subgroup of $G$.

Let $g\in P^\circ=\phi_{M^{\ssl}}(P)$
be an element satisfying $g^p=1$.
Since $\phi_{M^{\ssl}}(G)$ is $p$-soluble and as
$M^{\ssl}$ is a semi-simple left $\F_p[\phi_{M^{\ssl}}(G)]$-module, 
Corollary~\ref{cor:jor} implies that a Jordan block
of $g$ must have length $1$, $p-1$ or $p$.
Hypothesis (ii) yields that $(g-1)^{p-2}=0\in\End_{\F_p}(M^{\ssl})$. Thus every Jordan block of $g$
must have size $1$; i.e., $g=1$. Therefore, $P^\circ=\{1\}$, and this yields the claim.
\end{proof}

\subsection{The sandwich theorem}
\label{ss:sand}
Let $G$ be a finite group. By $O_{p^\prime,p}(G)$ we denote the normal 
subgroup of $G$ containing $O_{p^\prime}(G)$ and satisfying
\begin{equation}
\label{eq:opp}
O_{p^\prime,p}(G)/O_{p^\prime}(G)=O_p(G/O_{p^\prime}(G)).
\end{equation}
Similarly, we denote by $O_{p^\prime,p,p^\prime}(G)$ the normal subgroup of
$G$ containing $O_{p^\prime,p}(G)$ such that
\begin{equation}
\label{eq:oppp}
O_{p^\prime,p,p^\prime}(G)/O_{p^\prime,p}(G)=O_{p^\prime}(G/O_{p^\prime,p}(G)).
\end{equation}
The finite group $G$ will be called a {\it $p^\prime p p^\prime$-sandwich group},
if $G=O_{p^\prime,p,p^\prime}(G)$. Note that
a $p^\prime p p^\prime$-sandwich group is $p$-soluble. One has the following
property.

\begin{prop}
\label{prop:ppp}
Let $G$ be a finite $p^\prime p p^\prime$-sandwich group, and let $P\in\Syl_p(G)$.
Then $N_G(P)$ controls $p$-fusion in $G$.
\end{prop}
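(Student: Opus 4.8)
The plan is to exploit the structure of a $p'pp'$-sandwich group $G$ by passing through the two ``canonical'' quotients and reducing fusion control to the module-theoretic statement in Lemma~\ref{lemma:crit}. Write $N=O_{p'}(G)$, $\bar G = G/N$, and let $K=O_{p',p}(G)$, so that $\bar K = K/N = O_p(\bar G)$ is a normal $p$-subgroup of $\bar G$. Since $G$ is a sandwich group, $\bar G / \bar K = O_{p'}(\bar G/\bar K)$ is a $p'$-group; in particular $\bar K \in \Syl_p(\bar G)$. The first step is to observe that $N_G(P)$ controls $p$-fusion in $G$ if and only if it does so ``after killing $O_{p'}$'', i.e.\ $N_{\bar G}(\bar P)$ controls $p$-fusion in $\bar G$, where $\bar P = PN/N \cong P$: conjugation of $p$-elements in $G$ and in $\bar G$ agree because $N$ has order prime to $p$, and $N_G(P)N/N \le N_{\bar G}(\bar P)$ while a Frattini-type argument (each $g\in N_{\bar G}(\bar P)$ lifts to $g'\in G$ with $(P)^{g'}$ and $P$ Sylow in the same group, hence conjugate by $N_G(P)$) gives the reverse containment up to the subgroup generated by $N$ and $N_G(P)$. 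So it suffices to treat the case $O_{p'}(G)=1$, where now $O_p(G)\in\Syl_p(G)$, so $P=O_p(G)$ is normal; in that situation $N_G(P)=G$ and there is nothing to prove.

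The subtlety is that the above reduction is too cheap unless one is careful: one does \emph{not} in general have $O_{p'}(G)=1 \Rightarrow P$ normal for arbitrary $p$-soluble $G$, so the point where the sandwich hypothesis really enters is precisely the claim that $\bar K = O_p(\bar G)$ is a full Sylow $p$-subgroup of $\bar G$. That is exactly what $G = O_{p',p,p'}(G)$ gives. Thus the cleaner organization is: (1) reduce to $O_{p'}(G)=1$ as above; (2) note the sandwich condition is inherited, so $O_p(G)\in\Syl_p(G)$, i.e.\ $P=O_p(G)\trianglelefteq G$; (3) conclude $N_G(P)=G$ controls $p$-fusion trivially. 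The only genuine work is step (1), the standard but slightly fiddly verification that quotienting by the $p'$-radical neither creates nor destroys $p$-fusion control and that $N_G(P)$ maps onto $N_{\bar G}(\bar P)$; this is a routine application of the Schur--Zassenhaus/Frattini argument and of the fact that $P\mapsto PN/N$ is a bijection between the Sylow $p$-subgroups of $G$ and of $\bar G$.

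I expect the main obstacle to be purely expository: making precise what ``controls $p$-fusion'' means in the definition being used here (presumably: whenever $Q,Q^g \le P$ for $g\in G$, there is $c\in N_G(P)$ and $n\in C_G(Q)$ with $g=cn$, or the equivalent category-equivalence formulation $\ca C_P^G = \ca C_P^{N_G(P)}$) and checking that this formulation is genuinely invariant under passing to $G/O_{p'}(G)$. Given such a definition the argument is short, so in the writeup I would state the definition explicitly, prove the lemma-level fact ``$N_G(P)$ controls $p$-fusion in $G \iff N_{G/O_{p'}(G)}(\bar P)$ controls $p$-fusion in $G/O_{p'}(G)$'', and then dispatch the $O_{p'}(G)=1$ case in one line using the sandwich hypothesis. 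Note that Lemma~\ref{lemma:crit} and Corollary~\ref{cor:jor} are, somewhat surprisingly, not needed for Proposition~\ref{prop:ppp} itself --- they are the tools for deducing that the Sylow subgroups arising in Theorem~E really are sandwich groups --- so I would resist the temptation to invoke them here.
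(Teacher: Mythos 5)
Your overall strategy (kill $N=O_{p^\prime}(G)$, observe that the sandwich condition makes $\bar{P}=PN/N$ normal in $\bar{G}=G/N$, and conclude) can be made to work, but the step you set aside as ``routine'' is exactly where the entire content of the proposition sits, and your sketch of it does not close as stated. In the case at hand $N_{\bar{G}}(\bar{P})=\bar{G}$, so the downstairs statement is vacuously true; hence the direction of your claimed equivalence that you actually need --- ``control of fusion in $\bar{G}$ implies control of fusion in $G$'' --- carries no information from $\bar{G}$ and is literally the proposition itself. If one runs the transfer the way your sketch suggests (lift $\bar{g}=\bar{c}\,\bar{n}$ with $\bar{c}\in \Cent_{\bar{G}}(\bar{Q})$ and $\bar{n}\in N_{\bar{G}}(\bar{P})=\overline{N_G(P)}$, using the coprime centralizer lemma $\Cent_{\bar{G}}(\bar{Q})=\overline{\Cent_G(Q)}$), one only gets $g\in \Cent_G(Q)\,N\,N_G(P)$; but the very Frattini argument you invoke gives $G=N\,N_G(P)$ (since $PN=O_{p^\prime,p}(G)$ is normal and $P\in\Syl_p(PN)$), so this conclusion is empty. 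The missing idea is that conjugates landing in $P$ are determined modulo $N$ because $P\cap N=\{1\}$: write $g=mn$ with $m\in N$, $n\in N_G(P)$; for $x\in Q$ with $x^g\in P$ one has $x^g\in x^nN$ and $x^n\in P$, so $(x^n)^{-1}x^g\in P\cap N=\{1\}$, i.e. $x^g=x^n$ for all $x\in Q$, whence $gn^{-1}\in\Cent_G(Q)$ and $g\in\Cent_G(Q)N_G(P)$. With this short supplement your argument is complete --- and in fact you never need to pass to $\bar{G}$ at all: the Frattini decomposition $G=N\,N_G(P)$ together with $P\cap N=\{1\}$ does everything. You are right that Lemma~\ref{lemma:crit} and Corollary~\ref{cor:jor} play no role in this proposition.

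For comparison, the paper's proof is cohomological rather than element-theoretic: it uses the same Frattini argument to get $G=M\,N_G(P)$ with $M=O_{p^\prime}(G)$, identifies $G/M$ with $N_G(P)/(M\cap N_G(P))$, notes that the two inflation maps along these $p^\prime$-kernels are isomorphisms in mod $p$ cohomology, concludes that $\rst^G_{N_G(P)}$ is an isomorphism, and then quotes Mislin's theorem to convert the cohomology isomorphism into control of $p$-fusion. Your (repaired) route is more elementary, avoiding Mislin's theorem entirely; the paper's route yields in addition the cohomology isomorphism itself.
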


\begin{proof}
Let $Q=O_{p^\prime,p}(G)$ and $M=O_{p^\prime}(G)$; in particular, $P\leq Q$.
The Frattini argument yields that $G=Q.N_G(P)=M.N_G(P)$. Put
$M_\circ=M\cap N_G(P)$. Then one has a commutative diagram of finite groups
\begin{equation}
\label{dia:grdia}
\xymatrix{
N_G(P)\ar[rd]_\beta\ar[0,2]^\alpha&& G\ar[ld]^\gamma\\
&N_G(P)/M_\circ&
}
\end{equation}
and thus also a commutative diagram of homomorphism of $\F_p$-algebras
\begin{equation}
\label{dia:Hdia}
\xymatrix{
H^\bullet(N_G(P),\F_p)&& H^\bullet(G,\F_p)\ar[0,-2]_{H^\bullet(\alpha)}\\
&H^\bullet(N_G(P)/M_\circ,\F_p)\ar[ul]^{H^\bullet(\beta)}\ar[ur]_{H^\bullet(\gamma)}&
}
\end{equation}
Since $H^\bullet(\beta)$ and $H^\bullet(\gamma)$ are inflation mappings with respect
to normal subgroups of $p^\prime$-order, $H^\bullet(\beta)$ and $H^\bullet(\gamma)$
are isomorphisms. Hence $H^\bullet(\alpha)=\rst^G_{N_G(P)}$ is an isomorphism.
Thus by G.~Mislin's theorem (see \cite{mis:modp}), $N_G(P)$ controls $p$-fusion in $G$.
\end{proof}

The following lemma will turn out to be useful for our purpose.

\begin{lemma}
\label{lemma:cenin}
Let $p$ be an odd prime, and let $G$ be a finite $p$-soluble group.
Assume further that $O_{p^\prime}(G)=\{1\}$, $O_p(G)\not=\{1\}$,
and that $\Omega_1(O_p(G))$ is of exponent $p$. Then
$\Cent_G(\Omega_1(O_p(G))\leq O_p(G)$.
\end{lemma}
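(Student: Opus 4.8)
The plan is to use the standard fact from the theory of $p$-soluble groups that when $O_{p'}(G)=\{1\}$, the generalized Fitting subgroup is just $F(G)=O_p(G)$, and hence $\Cent_G(O_p(G))\leq O_p(G)$. First I would recall that for a $p$-soluble group $G$ with $O_{p'}(G)=\{1\}$ one has $\Cent_G(O_p(G))\leq O_p(G)$; indeed, writing $C=\Cent_G(O_p(G))$, the subgroup $C$ is normal in $G$, and if $C\not\leq O_p(G)$ then $C$ has a nontrivial characteristic subgroup of order prime to $p$ (namely a minimal normal subgroup of $G$ inside $C$ cannot be a $p$-group, because $O_p(G)\cap C = \Zen(O_p(G))$ and a $p$-group normal subgroup of $G$ lies in $O_p(G)$ hence centralizes $O_p(G)$ so lies in $\Zen(O_p(G))$, forcing any minimal normal $p$-subgroup of $G$ inside $C$ to be central in $O_p(G)$... ), contradicting $O_{p'}(G)=\{1\}$. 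More cleanly: $F(G)=O_p(G)$ since $O_{p'}(G)=\{1\}$ and $G$ is $p$-soluble (so $F(G)$ is nilpotent with no $p'$-part), and by a theorem on $p$-soluble (indeed soluble) groups, $\Cent_G(F(G))\leq F(G)$. Hence $\Cent_G(O_p(G))\leq O_p(G)$.

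Now the point is to upgrade from centralizing all of $O_p(G)$ to centralizing only $\Omega_1(O_p(G))$. Set $P_0=O_p(G)$ and let $C=\Cent_G(\Omega_1(P_0))$; this is a normal subgroup of $G$. I want to show $C\leq P_0$. Consider $C\cap P_0 = \Cent_{P_0}(\Omega_1(P_0))$. Since $\Omega_1(P_0)$ has exponent $p$ and is normal in $P_0$, and since the action of $P_0$ on $\Omega_1(P_0)$ by conjugation has image a $p$-group acting on a group of exponent $p$, I can invoke the hypothesis together with the structure theory: actually the relevant fact here is that $P_0$ acts faithfully on $\Omega_1(P_0)$ when... no, that is false in general, but it becomes available under $p$-centrality-type hypotheses. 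The cleaner route: it suffices to show $\Cent_G(\Omega_1(P_0))$ is a $p$-group, for then, being normal in $G$, it lies in $O_p(G)$.

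So the heart of the argument is: $\Cent_G(\Omega_1(P_0))$ is a $p$-group. Suppose not; then it contains a nontrivial element $x$ of order $q$ for some prime $q\neq p$. The subgroup $\langle x\rangle$ acts coprimely on $P_0$, trivially on $\Omega_1(P_0)$. By coprime action (see for instance the standard result that a coprime automorphism acting trivially on $\Omega_1$ of an abelian $p$-group, $p$ odd, acts trivially — more generally one uses that for $p$ odd a $p'$-automorphism of a $p$-group $P_0$ trivial on $\Omega_1(P_0)$ is trivial, which holds because the fixed-point subgroup argument plus the map $y\mapsto y^x y^{-1}$ forces things down through the $\Omega$-series), $x$ centralizes all of $P_0$. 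But then $x\in\Cent_G(O_p(G))\leq O_p(G)$, contradicting that $x$ is a nontrivial $p'$-element. Therefore $\Cent_G(\Omega_1(P_0))$ is a $p$-group, normal in $G$, hence contained in $O_p(G)$, which is exactly the assertion.

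The main obstacle is justifying the coprime-action step: that a $p'$-element $x$ acting on the $p$-group $P_0$ ($p$ odd) and centralizing $\Omega_1(P_0)$ must centralize $P_0$. The hypothesis that $\Omega_1(O_p(G))$ has exponent $p$ is presumably exactly what is needed to make this clean — one argues by induction on $|P_0|$, or along the series $\Omega_1(P_0)\leq\Omega_2(P_0)\leq\cdots$, using that for $p$ odd the $p$-th power map and the commutator map interact well (regularity-type phenomena), so that $[\Omega_{k+1}(P_0),\langle x\rangle]\leq\Omega_k(P_0)$ and the coprime fixed-point-free-on-a-quotient reasoning forces the commutator to vanish at each stage. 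I would check whether the paper has a lemma of this type available to cite; if not, I would isolate this coprime-action claim as a short sublemma and prove it by the induction just sketched, taking care that the exponent-$p$ hypothesis on $\Omega_1$ is used to anchor the base case. Once that sublemma is in hand, the rest of the proof is the two-line argument above: $\Cent_G(\Omega_1(O_p(G)))$ is normal and (by the sublemma plus $\Cent_G(O_p(G))\leq O_p(G)$) a $p$-group, hence lies in $O_p(G)$.
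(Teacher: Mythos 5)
Your argument is correct, but it is organized differently from the paper's. You first invoke the Hall--Higman type fact that $\Cent_G(O_p(G))\leq O_p(G)$ for a $p$-soluble group with $O_{p'}(G)=\{1\}$, and then show that $C=\Cent_G(\Omega_1(O_p(G)))$ is a $p$-group by letting an element $x\in C$ of prime order $q\neq p$ act on $O_p(G)$: it acts trivially on $\Omega_1(O_p(G))$, hence (as $p$ is odd) trivially on all of $O_p(G)$, so $x\in\Cent_G(O_p(G))\leq O_p(G)$, a contradiction; normality of $C$ then gives $C\leq O_p(G)$. The paper instead works inside $C$ itself: it notes $O_p(C)\leq O_p(G)$, takes a Hall $p'$-subgroup $H$ of the preimage $M$ of $O_{p'}(C/O_p(C))$, applies the same key fact (a $p'$-group of automorphisms of a $p$-group, $p$ odd, acting trivially on $\Omega_1$ acts trivially, \cite[Kap.~IV, Satz~5.12]{hupp1:grp}) to $H$ acting on $O_p(C)$, concludes $H=O_{p'}(M)\leq O_{p'}(G)=\{1\}$ and hence $C=O_p(C)\leq O_p(G)$; this avoids quoting the self-centralizing property of $O_p(G)$, which your route needs (legitimate, but note that for merely $p$-soluble $G$ this is the Hall--Higman lemma, not the soluble-group statement $\Cent_G(F(G))\leq F(G)$ as your parenthetical suggests). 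Two caveats on your version: the coprime-action step is precisely the classical theorem the paper cites, so cite it rather than rely on your sketched induction, which as written (e.g.\ the unproved containment $[\Omega_{k+1}(P_0),\langle x\rangle]\leq\Omega_k(P_0)$) does not constitute a proof and is genuinely harder than the sketch suggests; and the exponent-$p$ hypothesis on $\Omega_1(O_p(G))$ is not what makes that theorem work --- it holds for every finite $p$-group when $p$ is odd --- so it plays no essential role in your argument (nor, in fact, in the paper's).
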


\begin{proof}
Put $\Omega=\Omega_1(O_p(G))$ and $C=\Cent_G(\Omega)$.
Then $O_p(C)\leq O_p(G)$,
and therefore $\Omega_1(O_p(C))\leq\Omega$.
Thus $\Omega_1(O_p(C))\leq\Zen(\Omega)$, and $O_p(C)$ is a non-trivial
$p$-central normal subgroup of $G$.
Let $M$ be the characteristic subgroup of $C$ containing $O_p(C)$ such that 
$M/O_p(C)=O_{p^\prime}(C/O_p(C))$. Let $H\leq M$ be a $p^\prime$-Hall
subgroup of $M$. Then $H$ is acting trivially on $\Omega_1(O_p(C))$,
and thus on $O_p(C)$ (see \cite[Kap.~IV, Satz~5.12]{hupp1:grp}). 
In particular, $[H,O_p(C)]=\{1\}$, and therefore
$H=O_{p^\prime}(M)$. Hence, as $O_{p^\prime}(G)=\{1\}$, $H=\{1\}$,
and $C=O_p(C)$. This yields the claim.
\end{proof}

\begin{thm}
\label{thm:sand}
Let $p$ be odd, and let $G$ be a finite $p$-soluble group containing a Sylow $p$-subgroup $P\in\Syl_p(G)$
which is $p$-central of height $p-2$. Then $P.O_{p^\prime}(G)$ is normal in $G$.
In particular, $G$ is a $p^\prime p p^\prime$-sandwich group.
\end{thm}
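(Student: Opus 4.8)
The plan is to reduce to the situation where $O_{p'}(G)=\{1\}$ and then show that $P$ itself is normal, which yields both assertions (normality of $P.O_{p'}(G)$ and the sandwich property, the latter because a normal Sylow $p$-subgroup together with $O_{p'}(G)$ already exhausts $O_{p',p,p'}(G)$ in the quotient $G/O_{p'}(G)$ modulo $O_{p'}$-residue). So first I would pass to $\bar G=G/O_{p'}(G)$; since $P$ maps isomorphically onto a Sylow $p$-subgroup of $\bar G$ (which is again $p$-central of height $p-2$, being a subgroup of the $p$-central group $P$), we may assume $O_{p'}(G)=\{1\}$. If $O_p(G)=\{1\}$ then $G=\{1\}$ by $p$-solubility, so assume $O_p(G)\neq\{1\}$. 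By the Blackburn--Buckley--Thompson theorem (Theorem~B with $k=p-2$, $i=1$, applied to $P$), $\Omega_1(P)=\{x\in P\mid x^p=1\}$ has exponent $p$; hence $\Omega_1(O_p(G))=\Omega_1(G,p)\cap O_p(G)$ has exponent $p$, and Lemma~\ref{lemma:cenin} gives $\Cent_G(\Omega_1(O_p(G)))\leq O_p(G)$.

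The heart of the argument is to apply Lemma~\ref{lemma:crit} to a suitable module. Set $\Omega=\Omega_1(O_p(G))$ and $V=\Omega/\Phi(\Omega)\cong\Omega/[\Omega,\Omega]\Omega^p$, an $\F_p$-vector space on which $G$ acts by conjugation; let $\phi\colon G\to\GL_{\F_p}(V)$ be the associated representation. Since $\Cent_G(\Omega)\leq O_p(G)$ and $G/\Cent_G(\Omega)$ acts faithfully on $\Omega$, a coprime-action argument (a $p'$-element centralizing $V=\Omega/\Phi(\Omega)$ centralizes $\Omega$, by \cite[Kap.~IV, Satz~5.12]{hupp1:grp}) shows $\kernel(\phi)$ has no nontrivial $p'$-part in $G/O_p(G)$, i.e.\ $\kernel(\phi)\leq O_p(G)$; this is hypothesis (i) of Lemma~\ref{lemma:crit}. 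For hypothesis (ii) I must show that for every $g\in P$, $(\phi(g)-\iid_V)^{p-2}=0$. Here is where $p$-centrality of height $p-2$ enters: $P$ normalizes $O_p(G)$, hence acts on $\Omega$ and on $V$; because $P$ is $p$-central of height $p-2$, the subgroup $\Omega_1(P)$ lies in $\zeta_{p-2}(P)$, and more to the point $[\Omega,\,_{(p-2)}P]\leq[\Omega_1(\langle\Omega,P\rangle),\,_{(p-2)}\langle\Omega,P\rangle]$; one checks that the semidirect product $H=\Omega\rtimes P$ is again $p$-central of height $p-2$ when restricted appropriately, so $\Omega\leq\zeta_{p-2}(H)$ up to the relevant term, forcing $[\Omega,\,_{(p-2)}P]\leq\Phi(\Omega)$ and therefore $(\phi(g)-\iid_V)^{p-2}=0$ on $V$ for each $g\in P$.

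With both hypotheses verified, Lemma~\ref{lemma:crit} yields $P=\kernel(\phi)$, so $P$ is normal in $G$; since $O_{p'}(G)=\{1\}$ this says $P\trianglelefteq G$, and unwinding the reduction gives $P.O_{p'}(G)\trianglelefteq G$ in the original group. Finally $G/O_{p',p}(G)$ is then a $p'$-group, so $O_{p',p,p'}(G)=G$ and $G$ is a $p'pp'$-sandwich group. The main obstacle I anticipate is step (ii): making precise that the action of $P$ on $\Omega_1(O_p(G))$ inherits the bound $(\phi(g)-\iid)^{p-2}=0$ from height $p-2$. The clean way is to observe that $\Omega$ is an elementary abelian $p$-group (exponent $p$, though possibly non-abelian as a subgroup of $O_p(G)$ — so one works with $V=\Omega/\Phi(\Omega)$) normalized by $P$, form $K=V\rtimes P$, note every element of order $p$ in $K$ lies in $V\cup\text{(conjugates)}$ hence $K$ is $p$-central of height $\leq p-2$, so $V\leq\zeta_{p-2}(K)$, which is exactly $[V,\,_{(p-2)}P]=0$, i.e.\ nilpotency of $\phi(g)-\iid$ of the required degree.
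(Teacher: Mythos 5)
Your overall strategy is the paper's: reduce to $O_{p^\prime}(G)=\{1\}$, $O_p(G)\neq\{1\}$, note that $\Omega=\Omega_1(O_p(G))$ has exponent $p$ and use Lemma~\ref{lemma:cenin} to get $\Cent_G(\Omega)\leq O_p(G)$, then feed a module built from $\Omega$ into Lemma~\ref{lemma:crit}; the only cosmetic difference is that you take $V=\Omega/\Phi(\Omega)$ where the paper takes the graded module $\gr^{(1)}_\bullet(\Omega)$, and either choice handles hypothesis (i) by the same coprime-action argument. The step that does not hold up as written is your verification of hypothesis (ii) --- exactly the point you flag as the main obstacle. You try to obtain $[\Omega,_{(p-2)}P]\leq\Phi(\Omega)$ by forming external semidirect products $H=\Omega\rtimes P$ and $K=V\rtimes P$ and asserting they are $p$-central of height $p-2$. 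This is circular: $K$ being $p$-central of height $p-2$ would mean $V\leq\zeta_{p-2}(K)$, i.e.\ $[V,_{(p-2)}K]=1$, which is precisely what you are trying to prove. Moreover the supporting claim that every element of order $p$ of $K$ lies in $V$ (up to conjugacy) is false in general: an element $(v,g)$ with $g^p=1$ has order $p$ whenever $v+gv+\cdots+g^{p-1}v=0$; and in any case locating the order-$p$ elements does not by itself bound their height in $K$. So step (ii) has no proof as it stands.

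The repair is one line, and it is what the paper does implicitly: $O_p(G)$ is a normal $p$-subgroup, hence $O_p(G)\leq P$, so $\Omega=\Omega_1(O_p(G))\leq\Omega_1(P)\leq\zeta_{p-2}(P)$ (indeed $\Omega$ is generated by elements of $P$ of order dividing $p$; note also $\langle\Omega,P\rangle=P$, so no external semidirect product is needed). Consequently $[\Omega,_{(p-2)}P]\leq[\zeta_{p-2}(P),_{(p-2)}P]=1$, and therefore $(\phi(g)-\iid_V)^{p-2}=0$ on $V$ for every $g\in P$, which is hypothesis (ii) of Lemma~\ref{lemma:crit}. With this substitution your argument coincides in substance with the paper's proof: kernel containment in $O_p(G)$ via the coprime-action argument, Lemma~\ref{lemma:crit} giving $P=\kernel(\phi)$, hence $P\trianglelefteq G$ after the reduction, and then $P.O_{p^\prime}(G)\trianglelefteq G$ and $O_{p^\prime,p,p^\prime}(G)=G$ in general; those parts of your write-up are fine.
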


\begin{proof}
We may assume that $O_{p^\prime}(G)=\{1\}$ and $O_p(G)\not=\{1\}$.
Let $\Omega=\Omega_1(O_p(G))$. Since $P$ is $p$-central of height $p-2$, 
$\Omega$ is of exponent $p$.
Thus by Lemma~\ref{lemma:cenin}, one has $\Cent_G(\Omega)\leq O_p(G)$.

Let $M=\gr_\bullet^{(1)}(\Omega)$ denote the graded $\F_p[G]$-module associated to
the descending $p$-central series of $\Omega$ (see \eqref{eq:lamgr}), and let
$\phi_M\colon G\longrightarrow\GL_{\F_p}(M)$
denote the associated representation for $G$. 
Let $g\in \kernel(\phi(M))$ be an element of order
co-prime to $p$. Then $g\in\Cent_G(\Omega)\leq O_p(G)$.
Hence $g=1$, and $\kernel(\phi_M)\leq O_p(G)$.
As $P$ is $p$-central of height $p-2$,  one has for $g\in P$ that
$(\phi_M(g)-\iid_M)^{p-2}=0$. Hence $\kernel(\phi_M)=P$
by Lemma~\ref{lemma:crit}. This yields the claim.
\end{proof}

From Theorem~\ref{thm:sand} and Proposition~\ref{prop:ppp} one concludes
the following consequence which can be seen an analogue of 
J.~Th{\'e}venaz's result.

\begin{cor}
\label{cor:psolfus}
Let $p$ be odd, and let $G$ be a finite $p$-solvable group, for which
$P\in\Syl_p(G)$ is $p$-central of height $p-2$.
Then $N_G(P)$ controls $p$-fusion in $G$.
\end{cor}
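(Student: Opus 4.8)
The plan is to read the corollary off directly from the two results just established; no genuinely new argument is needed, so the ``proof'' is really a matter of concatenation.

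First I would invoke Theorem~\ref{thm:sand}. Since $G$ is finite $p$-soluble and $P\in\Syl_p(G)$ is $p$-central of height $p-2$, that theorem yields that $P\cdot O_{p^\prime}(G)$ is normal in $G$; equivalently $G=O_{p^\prime,p,p^\prime}(G)$, so $G$ is a $p^\prime p p^\prime$-sandwich group in the sense of Section~\ref{ss:sand}. Then I would apply Proposition~\ref{prop:ppp} to the sandwich group $G$ with $P\in\Syl_p(G)$, whose conclusion is precisely that $N_G(P)$ controls $p$-fusion in $G$. The only compatibility point to note is that ``$p$-solvable'' and ``$p$-soluble'' are the same hypothesis and that $P$ is the same Sylow subgroup in both invocations, so the chain goes through verbatim.

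Consequently there is essentially no obstacle left at this stage: the substance sits entirely inside Theorem~\ref{thm:sand}. If one wants to trace where the actual difficulty lies, it is the passage, after reducing to $O_{p^\prime}(G)=\{1\}$ and $O_p(G)\not=\{1\}$, from the hypothesis on $P$ to the normality of $P$ via the module $\gr_\bullet^{(1)}(\Omega_1(O_p(G)))$: Lemma~\ref{lemma:cenin} supplies $\Cent_G(\Omega_1(O_p(G)))\le O_p(G)$ (hence condition (i) of Lemma~\ref{lemma:crit}), while $p$-centrality of height $p-2$ forces $(\phi_M(g)-\iid_M)^{p-2}=0$ for $g\in P$ (condition (ii)); Corollary~\ref{cor:jor} then pins the admissible Jordan block sizes of an order-$p$ element on a semisimple module over the $p$-soluble quotient to $1$, $p-1$ or $p$, so the only possibility compatible with (ii) is size $1$, forcing $P$ to act trivially and hence to be normal. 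A direct alternative would be to establish that $\rst^G_{N_G(P)}$ is an isomorphism and then quote G.~Mislin's theorem, but routing through the $p^\prime p p^\prime$-sandwich notion and the inflation argument of Proposition~\ref{prop:ppp} is cleaner and reuses machinery already in place.
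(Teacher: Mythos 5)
Your proposal is correct and matches the paper's own argument exactly: Corollary~\ref{cor:psolfus} is obtained by combining Theorem~\ref{thm:sand} (which makes $G$ a $p^\prime p p^\prime$-sandwich group) with Proposition~\ref{prop:ppp}. Your additional tracing of where the real work sits (Lemma~\ref{lemma:cenin}, Lemma~\ref{lemma:crit}, Corollary~\ref{cor:jor}) is accurate but not needed for the corollary itself.
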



\providecommand{\bysame}{\leavevmode\hbox to3em{\hrulefill}\thinspace}
\providecommand{\MR}{\relax\ifhmode\unskip\space\fi MR }
\providecommand{\MRhref}[2]{%
  \href{http://www.ams.org/mathscinet-getitem?mr=#1}{#2}
}
\providecommand{\href}[2]{#2}


\end{document}